\newtheorem{theorem}{Theorem}[section]
\newtheorem{proposition}[theorem]{Proposition}
\newtheorem{lemma}[theorem]{Lemma}
\newtheorem{remark}[theorem]{Remark}
\newtheorem{corollary}[theorem]{Corollary}
\newcommand{\cO}{{\mathcal O }}
   \newcommand{\ba}{\begin{eqnarray}}
   \newcommand{\na}{\end{eqnarray}}
   \newcommand{\ban}{\begin{eqnarray*}}
   \newcommand{\nan}{\end{eqnarray*}}
  \newcommand{\Z}{{\mathbb Z}}
\newcommand{\bP}{{\mathbb P}}
\newcommand{\bQ}{{\mathbb Q}}
\newcommand{\bZ}{{\mathbb Z}}
  \renewcommand{\a}{\alpha}
  \newcommand{\<}{\langle}
  \renewcommand{\>}{\rangle}
\begin{document}

\title[Blow-up formulae of high genus GW invariants]{Blow-up formulae of high genus Gromov-Witten invariants in dimension six}

\author[Weiqiang He]{Weiqiang He$^1$}
  \address{Department of Mathematics\\  Sun Yat-Sen University\\
                        Guangzhou,  510275\\ China }
  \email{btjim1987@gmail.com}
\thanks{${}^1$Partially supported by China Scholarship Council}

  \author[Jianxun Hu] {Jianxun Hu$^2$}
  \address{Department of Mathematics\\  Sun Yat-Sen University\\
                        Guangzhou,  510275\\ China }
  \email{stsjxhu@mail.sysu.edu.cn}
\thanks{${}^2$Partially supported by NSFC Grant 11228101 and 11371381}

\author[Hua-Zhong Ke]{Hua-Zhong Ke}
  \address{Department of Mathematics\\  Sun Yat-Sen University\\
                        Guangzhou,  510275\\ China }
  \email{kehuazh@mail.sysu.edu.cn}

\author[Xiaoxia Qi]{Xiaoxia Qi}
  \address{ Sino-French Institute of Nuclear and Technology\\ Sun Yat-sen University\\ Tang Jia Wan, Zhuhai 519082\\ China }
\email{qxiaoxia@mail.sysu.edu.cn}

\begin{abstract}

   Using the degeneration formula and absolute/relative correspondence, one studied the change of Gromov-Witten invariants under blow-up for six dimensional symplectic manifolds and obtained closed blow-up formulae for high genus Gromov-Witten invariants. Our formulae also imply some relations among generalized BPS numbers introduced by Pandharipande.

\ \\
Key words:
Gromov-Witten invariant,
Blow-up,
Degeneration formula,
Absolute/relative correspondence,
Degenerate contribution
\end{abstract}

\date{\today}
\maketitle
\tableofcontents

\section{Introduction}

Gromov-Witten invariants count stable pseudo-holomorphic curves in a symplectic manifold. The Gromov-Witten invariants for semi-positive symplectic manifolds were first defined by Ruan \cite{R1} and Ruan-Tian \cite{RT1, RT2}. Gromov-Witten invariants can be applied to define a quantum product on the cohomology groups of a symplectic manifold in \cite{RT1} and have many applications in symplectic geometry and symplectic topology, see \cite{MS} and references therein. Using the virtual moduli cycle technique, Li-Tian \cite{LT1} defined the Gromov-Witten invariants purely algebraically for smooth projective varieties. During last two decades, there were a great deal of activities to remove the semi-positivity condition, see \cite{B, FO, R2, S, LT2}. After its mathematical foundation was established, the study of Gromov-Witten theory focused on its computation and applications. We now know a lot about genus zero invariants of, say, toric manifolds, homogeneous spaces, etc. Some of the higher genus computations have also been done, but the understanding of higher genus Gromov-Witten invariants is  still far from complete.

The computation of the Gromov-Witten invariants is known to be a difficult problem in geometry and physics. There are two major techniques: the degeneration formula and localization. Li-Ruan \cite{LR} first obtained the degeneration formula, see \cite{IP} for a different version and \cite{Li} for an algebraical version. It used to be applied to the situations that a symplectic or Kahler manifold $X$ degenerates into a union of two pieces $X^\pm$ glued along a common divisor $Z$. The idea of degeneration formula is to express the Gromov-Witten invariants of $X$ in terms of relative Gromov-Witten invariants of the pairs $(X^\pm, Z)$. Localization played a very important role in the computation of Gromov-Witten invariants. Kontsevich \cite{Ko2} first introduced this technique into this field, then Givental \cite{Gi} and Lian-Liu-Yau \cite{LLY} applied this technique to prove the mirror theorem in the genus zero case. So far the computation of high genus invariants is still a difficult task. The difficulty is that the localization technique often transfers the computation of high genus invariants into that of some Hodge integrals over $\bar{\mathcal M}_{g,n}$, which so far one does not have effective methods to compute. To obtain some general structures or close formulae of Gromov-Witten theory in many applications, we degenerate a symplectic or Kahler manifold into two toric relative pairs $(X^\pm,Z)$ and then use the localization technique to compute the associated relative invariants, see \cite{HLR, MP}. The combination of the degeneration technique and localization technique has proven to be very powerful.

Ruan \cite{R3} speculated that there should be a deep relation between quantum cohomology and birational geometry. The birational symplectic geometry program requires a thorough understanding of blow-up type formula of Gromov-Witten invariants and quantum cohomology, because blow-up is the elementary birational surgery. Actually, it is rare to be able to obtain a general blow-up formula. For the last twenty years, only a few limited case were known, see \cite{H1,H2,G}. Hu-Li-Ruan \cite{HLR} studied the change of Gromov-Witten invariants under blow-up and obtained a blow-up correspondence of absolute/relative Gromov-Witten invariants. The second named author \cite{H1,H2} obtained some blow-up formulae for genus zero Gromov-Witten invariants. In this paper, we try to apply the degeneration formula to study the change of Gromov-Witten invariants under blow-ups and generalize a genus zero formula in \cite{H1} to all genera case in dimension six.

Throughout this paper, let $X$ be a connected, closed, smooth symplectic manifold of real dimension six, and $p:\tilde{X}\rightarrow X$ the natural projection of the symplectic blow-up $\tilde{X}$ of $X$ along a connected smooth symplectic submanifold of $X$. Let $E$ be the exceptional divisor of the blow-up, and $e\in H_2(\tilde X,\bZ)$ the class of a line in the fiber of $E$. Note that $p$ induces a natural injection via 'pullback' of $2$-cycles
$$
p^!=PD_{\tilde X}\circ p^*\circ PD_X:H_2(X,\bZ)\rightarrow H_2(\tilde X,\bZ),
$$
where the image of $p^!$ is the subset of $H_2(\tilde X,\bZ)$ consisting of $2$-cycles having intersection number zero with $E$.

We first consider blow-up at a point. Given a nonzero class $A\in H_2(X,\bZ)$, from the viewpoint of geometry, we could express the condition of counting curves with homology class $A$ passing through a generic point in $X$ in two ways: adding a point class, or blowing up $X$ at the point and counting curves in $\tilde X$ with homology class $p^!A-e$. One would expect that the two methods give the same Gromov-Witten invariants, which was proved by the second named author \cite{H1} in all dimensions for $g=0$, and by the fourth named author \cite{Q} in real dimension four for all genera. In this paper, we study the dimension six case for all genera:
\begin{theorem}\label{pt1}
Let $p:\tilde X\rightarrow X$ be the blow-up at a point. Suppose  that $\alpha_1,\cdots,\alpha_m\in H^{>0}(X, {\mathbb Q})$, $1\leq i \leq m$, and $d_1,\cdots,d_m\in\bZ_{\geqslant 0}$. Then for nonzero $A\in H_2(X, {\mathbb Z})$ and $g\geq 0$, we have
$$
    \langle[pt],\tau_{d_1}\alpha_1,\cdots,\tau_{d_m}\alpha_m\rangle^X_{g, A}
 = \sum_{g_1+g_2=g}\frac{(-1)^{g_1}\cdot 2}{(2g_1 + 2)!}\langle \tau_{d_1}p^*\alpha_1,\cdots, \tau_{d_m}p^*\alpha_m\rangle^{\tilde{X}}_{g_2, p^!A-e}.
$$
\end{theorem}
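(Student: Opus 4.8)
\section*{Proof proposal}

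The plan is to interpret the point insertion on the left as the extra incidence condition produced by a blow-up, via a deformation to the normal cone of the centre. Deforming $X$ to the normal cone of the blown-up point gives a one-parameter degeneration whose general fibre is $X$ and whose central fibre is the normal-crossing union
$$
\tilde X\cup_Z P,\qquad P=\bP(T_{pt}X\oplus\cO)\cong\bP^3,\quad Z=E\cong\bP^2,
$$
where $Z$ is the exceptional divisor $E$ on the $\tilde X$ side and the hyperplane at infinity on the $P$ side. First I would apply the degeneration formula of \cite{LR,Li} to $\langle[pt],\tau_{d_1}\alpha_1,\dots,\tau_{d_m}\alpha_m\rangle^X_{g,A}$. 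Choosing representatives of the $\alpha_i$ away from the centre and placing the point $[pt]$ inside the bubble $P$, the formula writes this invariant as a sum, over splittings of the genus, the curve class and the insertions, and over a pair of dual bases of $H^*(Z)$ at the distinguished contact point, of products of relative invariants of $(\tilde X,E)$ and of $(\bP^3,\bP^2)$. Since $p^*\alpha_i|_{P}=0$ for $\alpha_i\in H^{>0}(X)$, every $\alpha_i$ must be placed on the $\tilde X$ side, so that the only absolute insertion on the $P$ side is $[pt]$.

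Next I would show that a single term survives. If the $P$-side class is $k\ell$ (with $\ell$ the line) and the contact profile is $\mu\vdash k$, the relative virtual dimension of $(\bP^3,\bP^2)$ is $\int_{k\ell}\bigl(c_1(T_{\bP^3})-H\bigr)+1+\ell(\mu)=3k+1+\ell(\mu)$, while the constraints impose codimension $3+\sum_i c_i$ with each $c_i\le 2$; together with $\ell(\mu)\le k$ this forces $k=1$, $\mu=(1)$, and the relative insertion to be the point class $[pt]_{\bP^2}$. The matching $\tilde X$-side class is then $p^!A-e$: it pushes forward to $A$ and meets $E$ in $(p^!A-e)\cdot E=1$, consistent with a single simple contact, and since the connecting node is unique one gets $g=g_1+g_2$ with no loop, matching the genus sum in the statement. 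The $\tilde X$-side relative insertion, Poincar\'e dual to $[pt]_{\bP^2}$, is $1\in H^0(E)$; as the contact order equals $1$ and carries the trivial class, the absolute/relative correspondence of Hu--Li--Ruan \cite{HLR} gives
$$
\langle\tau_{d_1}p^*\alpha_1,\dots,\tau_{d_m}p^*\alpha_m\mid 1\rangle^{(\tilde X,E)}_{g_2,\,p^!A-e}
=\langle\tau_{d_1}p^*\alpha_1,\dots,\tau_{d_m}p^*\alpha_m\rangle^{\tilde X}_{g_2,\,p^!A-e}.
$$
With the gluing factor $\prod_i\mu_i/|\mathrm{Aut}(\mu)|=1$ for $\mu=(1)$, the degeneration formula then collapses to
$$
\langle[pt],\tau_{d_i}\alpha_i\rangle^X_{g,A}
=\sum_{g_1+g_2=g}c_{g_1}\cdot\langle\tau_{d_i}p^*\alpha_i\rangle^{\tilde X}_{g_2,\,p^!A-e},
\qquad c_{g_1}:=\langle[pt]\mid[pt]_{\bP^2}\rangle^{(\bP^3,\bP^2)}_{g_1,\ell}.
$$

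It remains to evaluate the purely local, ``degenerate'' contribution $c_{g_1}$, and this is the heart of the matter and the main obstacle. I would compute it by (relative) virtual localization on $(\bP^3,\bP^2)$ with the standard torus action fixing $\bP^2$: the contributing graphs reduce to a single degree-one edge (the line through the point) carrying contracted genus-$g_1$ vertices, so that $c_{g_1}$ is expressed through Hodge integrals over $\overline{\cM}_{g_1,\bullet}$ against the Euler class of the obstruction built from the normal bundle $\cO(1)\oplus\cO(1)$ of the line. Assembling these over all $g_1$ into a generating series, the expected outcome is
$$
\sum_{g_1\ge 0}c_{g_1}\,\lambda^{2g_1+2}=\bigl(2\sin(\lambda/2)\bigr)^2,
$$
whose Taylor coefficients are exactly $c_{g_1}=\dfrac{(-1)^{g_1}\cdot 2}{(2g_1+2)!}$, which yields the stated formula; at $g_1=0$ one has $c_0=1$, recovering the genus-zero result of \cite{H1}. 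The difficulty is carrying out this all-genus Hodge-integral evaluation in closed form, which I expect to handle through $\lambda_g$-type integrals and the Gromov--Witten/Hurwitz correspondence; the emergence of $2\sin(\lambda/2)$ is precisely the building block responsible for the relations among Pandharipande's generalized BPS numbers advertised in the abstract.
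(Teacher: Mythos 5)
Your opening move is the same as the paper's: degenerate $X$ to the normal cone of the point, apply the degeneration formula, and use the dimension constraint to kill everything except $\mu=(1)$ with the point class $[pt]_{\bP^2}$ on the $\bP^3$ side and $\mathbbm 1$ on the $(\tilde X,E)$ side; this is exactly Lemma \ref{ptlemma1}, and your final coefficients are the right ones. The critical gap is the step immediately after, where you assert, citing the Hu--Li--Ruan correspondence, that
$$
\langle\tau_{d_1}p^*\alpha_1,\dots,\tau_{d_m}p^*\alpha_m\mid \mathbbm 1\rangle^{\tilde X,E}_{g_2,\,p^!A-e,(1)}
=\langle\tau_{d_1}p^*\alpha_1,\dots,\tau_{d_m}p^*\alpha_m\rangle^{\tilde X}_{g_2,\,p^!A-e}.
$$
This is not a tautology and it is not what \cite{HLR} proves: in positive genus a relative invariant with a single simple contact point carrying $\mathbbm 1$ differs from the corresponding absolute invariant by rubber/bubble contributions, and Theorem 5.15 of \cite{HLR} is precisely a genus-mixing triangular system between absolute and relative invariants (this is how the paper invokes it in Lemma \ref{ptlemma3}), not a term-by-term identity. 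The paper's Lemma \ref{ptlemma2} exists exactly because your identity cannot be assumed: degenerating $\tilde X$ along $E$ shows that the absolute invariant of $\tilde X$ is the convolution $\sum_{g_1+g_2=g}J_{g_1}K_{g_2}$, where $K_g$ is your relative invariant and $J_g={\langle \,\,|[pt]\rangle}_{g,F,(1)}^{\tilde{\bP}^3,H}$. Your claimed equality is equivalent to $J_g=\delta_{g,0}$ for all $g$, an unproven statement about all-genus relative degenerate contributions which is of essentially the same difficulty as the theorem you are trying to prove.

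The second gap is that your evaluation of the coefficients $c_g={\langle [pt]|[pt]\rangle}^{\bP^3,H}_{g,L,(1)}$ is explicitly only an ``expected outcome'' of a relative virtual localization / Hodge-integral computation that you do not carry out. The paper's proof is organized so that neither gap ever arises: with the two comparison lemmas (Lemma \ref{ptlemma1} and Lemma \ref{ptlemma2}) in hand, both $\langle[pt],\cdots\rangle^X_{g,A}$ and $\langle\cdots\rangle^{\tilde X}_{g,p^!A-e}$ are convolutions against the \emph{same} unknown sequence $K_g$; since $I_0,J_0\neq 0$ the triangular system can be inverted formally, yielding $H_g=\sum C_{g_1}P_{g_2}$ with universal coefficients $C_g$ depending only on the local geometry, and these are then pinned down by the specialization $X=\bP^3$, $m=1$, $\alpha_1=[pt]$, $A=L$ in \eqref{equ}, where both sides are \emph{absolute} invariants known from Pandharipande's degenerate contribution results: $\langle[pt],[pt]\rangle^{\bP^3}_{g,L}=\frac{(-1)^g\cdot 2}{(2g+2)!}$ and $\langle[pt]\rangle^{\tilde\bP^3}_{g,F}=\delta_{g,0}$. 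In other words, the unknown relative invariants cancel out of the final answer and no relative computation is ever needed. To repair your argument you would have to either prove $J_g=\delta_{g,0}$ (a genuine rubber computation) or complete your closed-form relative localization; the paper's elimination trick makes both unnecessary.
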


\begin{theorem}\label{pt2}
Under the same assumptions as in Theorem \ref{pt1}, we have
\begin{eqnarray*}
   & &\langle\tau_1[pt],\tau_{d_1}\alpha_1,\cdots,\tau_{d_m}\alpha_m\rangle^X_{g, A} \\
   &=&\sum_{g_1+g_2=g}\frac{(-1)^{g_1}}{(2g_1 + 1)!}\langle -E^2,\tau_{d_1}p^*\alpha_1,\cdots, \tau_{d_m}p^*\alpha_m\rangle^{\tilde{X}}_{g_2, p^!A-e}.
\end{eqnarray*}
\end{theorem}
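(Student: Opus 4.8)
The plan is to follow the same degeneration argument used for Theorem \ref{pt1}, changing only the insertion carried by the local piece; the one genuine change will be in a single local relative integral. First I would degenerate $X$ to the normal cone of the blown-up point $x$, giving the singular total space $\tilde X \cup_E P$ with $P = \mathbb{P}(T_xX \oplus \mathbb{C}) \cong \mathbb{P}^3$ and gluing divisor $E \cong \mathbb{P}^2$, which sits in $\tilde X$ as the exceptional divisor (with $N_{E/\tilde X} = \mathcal{O}_E(-1)$) and in $P$ as the hyperplane at infinity. Applying the degeneration formula of \cite{LR, Li} to $\langle \tau_1[pt], \tau_{d_1}\alpha_1, \ldots, \tau_{d_m}\alpha_m\rangle^X_{g,A}$, the insertions $\tau_{d_i}p^*\alpha_i$ (whose supports avoid $x$) are carried to the $\tilde X$-piece, while $\tau_1[pt]$ is carried to $P$ as $\tau_1[\text{pt}_P]$ for a generic interior point $\text{pt}_P$.

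Next I would cut the right-hand sum down to a single class. Since $\text{pt}_P$ is generic, a curve of class $A$ meeting it has multiplicity $1$ there, so its proper transform lies in class $p^!A - e$ and, because $(p^!A - e)\cdot E = 1$, meets $E$ in one point; all higher-multiplicity (hence higher-contact) configurations are non-generic and drop out, exactly as in Theorem \ref{pt1}. Thus the contact order along $E$ is the trivial partition $(1)$, the $\tilde X$-side class is $p^!A - e$, and the matching $P$-side map is a genus-$g_1$ map in the line class meeting $E$ at the single contact point. With one gluing node the genus splits additively, $g = g_1 + g_2$, and the invariant reduces to a pairing, over a basis $\{T_a\}$ of $H^*(E)$, of a relative invariant of $(\tilde X, E)$ in class $p^!A - e$ against the local relative invariant of $(P, E)$ in the line class.

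The crux is this local relative invariant of $(P, E) = (\mathbb{P}^3, \mathbb{P}^2)$ carrying $\tau_1[\text{pt}_P]$ and one order-$1$ contact point. I would evaluate it by localization, reducing it to Hodge integrals over $\bar{\mathcal{M}}_{g_1, \bullet}$. The extra $\psi$-class from $\tau_1$ (versus $\tau_0$ in Theorem \ref{pt1}) does two things: it shifts the numerical factor to $\frac{(-1)^{g_1}}{(2g_1+1)!}$, and, through the dimension constraint along $E$, it selects the contact weight $T_a$ whose gluing dual is the hyperplane class $h = c_1(\mathcal{O}_E(1))$ rather than $1 \in H^0(E)$. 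Since $E|_E = -h$ gives $i_* h = -E^2$ for $i\colon E \hookrightarrow \tilde X$, the contact point now persists as a genuine marked point on the $\tilde X$-side carrying $-E^2$; this converts the relative invariant of $(\tilde X, E)$ into the absolute invariant $\langle -E^2, \tau_{d_1}p^*\alpha_1, \ldots \rangle^{\tilde X}_{g_2, p^!A - e}$. Summing over $g_1 + g_2 = g$ yields the stated identity. (In this picture Theorems \ref{pt1} and \ref{pt2} are the $\tau_0$ and $\tau_1$ specializations of one local computation, which explains both the parallel shape of the formulae and why $\tau_1$ trades a marked point for the insertion $-E^2$.)

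I expect the main obstacle to be precisely this local descendant integral: one must confirm that the $\tau_1$-insertion produces exactly the factor $\frac{(-1)^{g_1}}{(2g_1+1)!}$ and that the surviving contact weight is $h$ and not the fundamental class, so that reattachment gives $-E^2$ with the correct sign and not a multiple of the fundamental class of $\tilde X$. Secondary care is needed to discard contracted or unstable components and contact profiles other than $(1)$; this is where the dimension-six hypothesis, which renders the virtual dimension independent of the genus, enters, just as in Theorem \ref{pt1}.
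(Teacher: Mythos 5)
The first half of your argument (degeneration to the normal cone of the point, reduction to contact order $(1)$ with the surviving relative insertion being the line class $\xi$ of $E\cong\mathbb P^2$) is exactly the paper's Lemma \ref{ptlemma4}, though you should justify the reduction by the dimension constraint \eqref{dimension} rather than by genericity of the point: in virtual Gromov--Witten theory non-generic configurations do not simply ``drop out'', and it is the constraint $\frac12\sum\deg\delta^{j_i}+3|\mu|=3+\ell(\mu)$ that kills all other terms. The genuine gap is your final step: you assert that, because the surviving relative insertion $\xi$ satisfies $i_*\xi=-E^2$, the relative invariant $\langle\tau_{d_1}p^*\alpha_1,\cdots|\xi\rangle^{\tilde X,E}_{g_2,p^!A-e,(1)}$ ``converts'' into the absolute invariant $\langle -E^2,\tau_{d_1}p^*\alpha_1,\cdots\rangle^{\tilde X}_{g_2,p^!A-e}$. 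In higher genus this identification is unproven (and is not true termwise in general): absolute and relative invariants differ by nontrivial corrections, and quantifying them is precisely the content of the absolute/relative correspondence of Maulik--Pandharipande and Hu--Li--Ruan. Concretely, the paper's Lemma \ref{ptlemma5} (a second degeneration, of $\tilde X$ along $E$) gives
$$
\langle -E^2,\tau_{d_1}p^*\alpha_1,\cdots\rangle^{\tilde X}_{g,p^!A-e}
=\sum_{g^++g^-=g}\langle -E^2|\xi\rangle^{\tilde{\mathbb P}^3,H}_{g^+,F,(1)}\cdot
\langle\tau_{d_1}p^*\alpha_1,\cdots|\xi\rangle^{\tilde X,E}_{g^-,p^!A-e,(1)},
$$
so your claim amounts to asserting $\langle -E^2|\xi\rangle^{\tilde{\mathbb P}^3,H}_{g,F,(1)}=\delta_{g,0}$ for all $g$, a relative (rubber/Hodge-integral) computation you neither perform nor cite. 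Similarly, your value $\frac{(-1)^{g_1}}{(2g_1+1)!}$ for the local relative invariant $\langle\tau_1[pt]|\xi\rangle^{\mathbb P^3,H}_{g_1,L,(1)}$ is asserted rather than computed; in the paper that number appears as the \emph{absolute} invariant $\langle\tau_1[pt],[L]\rangle^{\mathbb P^3}_{g,L}$, which a priori need not equal the relative one.

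The paper is structured precisely to avoid both of these local relative computations: it combines the two degeneration identities (Lemmas \ref{ptlemma4} and \ref{ptlemma5}) into a triangular linear system in the common relative invariants, inverts it (Lemma \ref{ptlemma6}) to obtain a universal relation $H_g=\sum_{g_1+g_2=g}C_{g_1}P_{g_2}$ whose coefficients are independent of $(X,A,\alpha_i)$, and then determines the $C_g$ by specializing to $X=\mathbb P^3$, $m=1$, $\alpha_1=[L]$, $A=L$, where both sides are \emph{absolute} invariants computable by virtual localization. To repair your proof you must either follow this route, or genuinely establish the two local statements you assume, which would require relative virtual localization (Graber--Vakil) rather than the ordinary localization you invoke.
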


Through studying the proof of Theorem \ref{pt2} carefully, we obtain the following result, which seems to be nontrivial when compared with divisor equation and dilaton equation.

\begin{theorem}\label{pt3}
Under the same assumptions as in Theorem \ref{pt1}, we have
\ban
&&\langle\tau_1E,\tau_{d_1}p^*\alpha_1,\cdots,\tau_{d_m}p^*\alpha_m\rangle^{\tilde X}_{g,p^!A-e}\\ &=&3\langle -E^2,\tau_{d_1}p^*\alpha_1,\cdots, \tau_{d_m}p^*\alpha_m\rangle^{\tilde{X}}_{g, p^!A-e}\\
&&\qquad-2\sum_{g_1+g_2=g}\frac{(-1)^{g_1}}{(2g_1 + 1)!}\langle -E^2,\tau_{d_1}p^*\alpha_1,\cdots, \tau_{d_m}p^*\alpha_m\rangle^{\tilde{X}}_{g_2, p^!A-e}.
\nan
\end{theorem}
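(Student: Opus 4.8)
The plan is to deduce Theorem \ref{pt3} from the degeneration analysis underlying Theorem \ref{pt2}, by re-examining how the relative invariants of the pair $(\tilde X, E)$ that appear there convert back into absolute invariants of $\tilde X$. Since Theorem \ref{pt2} is already available, the sum $\sum_{g_1+g_2=g}\frac{(-1)^{g_1}}{(2g_1+1)!}\langle -E^2,\ldots\rangle^{\tilde X}_{g_2,p^!A-e}$ equals $\langle\tau_1[pt],\ldots\rangle^X_{g,A}$, so the assertion of Theorem \ref{pt3} is equivalent to the three-term identity
\[
\langle\tau_1 E,\ldots\rangle^{\tilde X}_{g,p^!A-e}=3\langle -E^2,\ldots\rangle^{\tilde X}_{g,p^!A-e}-2\langle\tau_1[pt],\ldots\rangle^X_{g,A}.
\]
First I would establish this local identity; the stated form then follows by substituting Theorem \ref{pt2}.

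To prove the three-term identity, I would run the same deformation to the normal cone that proves Theorem \ref{pt2}: degenerate $X$ into $\tilde X\cup_E\bar P$ with $\bar P=\mathbb{P}(N\oplus\mathcal{O})=\mathbb{P}^3$ and $E=\mathbb{P}^2$ the common divisor, whose normal bundle in $\tilde X$ is $\mathcal{O}_{\mathbb{P}^2}(-1)$. Because $(p^!A-e)\cdot E=1$, only the simple contact profile $\mu=(1)$ contributes, so the degeneration formula reduces each of the three invariants to a single product of a $(\tilde X,E)$-relative factor and a $\bar P$-relative factor, summed over the genus splitting $g_1+g_2=g$. The $\bar P$-side factors are the local Hodge integrals already computed in Theorem \ref{pt2}, whose generating series encode the kernels of Theorems \ref{pt1}--\ref{pt2} (expected to be $\tfrac{\sin x}{x}$ and $\tfrac{2(1-\cos x)}{x^2}$), while the $(\tilde X,E)$-relative factors are common to all three invariants.

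The core step is the absolute/relative correspondence for $(\tilde X,E)$: I would express the relative invariant carrying one contact point of order $1$, weighted by a basis of $H^*(\mathbb{P}^2)$, in terms of absolute descendant invariants of $\tilde X$. Tracking the rubber contribution over $E$, whose target is $\mathbb{P}(\mathcal{O}\oplus\mathcal{O}_{\mathbb{P}^2}(-1))$, the tangency-$1$ marking converts into a combination of the insertions $-E^2=j_*(h)$ and $\tau_1 E=\tau_1 j_*(1)$: the $\psi$-class comparison supplies the descendant, and $c_1(N_{E/\tilde X})=-h=E|_E$ accounts for the passage between $E$ and $-E^2$. The universal factor $3$ is forced by the geometry of the exceptional $\mathbb{P}^2$ (reflected in $c_1(T_{\mathbb{P}^2})=3h$), whereas the factor $2$ matches the kernel already appearing in Theorems \ref{pt1}--\ref{pt2}. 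Comparing the three degeneration expansions term by term in $g_2$ then yields the identity.

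The main obstacle is exactly this rubber and Hodge-integral bookkeeping: one must verify that no contact order $\geq 2$ and no disconnected rubber configurations contribute beyond those accounted for, and that the combination of the cotangent-line comparison on $\bar M_{g_2,\bullet}(\tilde X,E)$ with the restriction $E|_E=-h$ produces precisely the constants $3$ and $-2$ rather than genus-dependent corrections. I expect this to be the most delicate part, since it is where Theorem \ref{pt3} departs from the naive divisor and dilaton equations.
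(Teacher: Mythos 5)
Your overall skeleton is the same as the paper's: degenerate so that all invariants in play factor through the common relative invariants $\langle\cdots|\xi\rangle^{\tilde X,E}_{g,p^!A-e,(1)}$, and compare the universal kernels; your reformulation via Theorem \ref{pt2} of the statement as the three-term identity $\langle\tau_1E,\cdots\rangle^{\tilde X}_{g,p^!A-e}=3\langle-E^2,\cdots\rangle^{\tilde X}_{g,p^!A-e}-2\langle\tau_1[pt],\cdots\rangle^X_{g,A}$ is also valid. The genuine gap is exactly the step you flag as ``the most delicate part'': the constants $3$ and $-2$ are never derived. The appeal to $c_1(T_{\mathbb P^2})=3h$ and to ``matching the kernel of Theorems \ref{pt1}--\ref{pt2}'' is heuristic, and these values \emph{are} the entire content of the theorem. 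What your proposal is missing is the paper's closing idea: by Lemma \ref{ptlemma8} the coefficients $C_g$ in $H_g=\sum_{g_1+g_2=g}C_{g_1}P_{g_2}$ are universal (independent of $X$, $m$, $\alpha_i$, $A$), so they can be pinned down by evaluating the relation in a single computable example. The paper takes $\mathbb P^3$ blown up at a point with $m=1$, $\alpha_1=[L]$, $A=L$ (so $p^!A-e=F$ in $\tilde{\mathbb P}^3\cong\mathbb P_{\mathbb P^2}(\mathcal O\oplus\mathcal O(-1))$), where virtual localization gives $\langle\tau_1E,L\rangle^{\tilde{\mathbb P}^3}_{g,F}=3\delta_{g,0}-\tfrac{2(-1)^g}{(2g+1)!}$ and $\langle-E^2,L\rangle^{\tilde{\mathbb P}^3}_{g,F}=\delta_{g,0}$; the triangular convolution system then forces $C_g=3\delta_{g,0}-\tfrac{2(-1)^g}{(2g+1)!}$, which is precisely the theorem. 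Your route would instead require an honest computation of the local relative invariants $\langle\tau_1E|\xi\rangle^{\tilde{\mathbb P}^3,H}_{g,F,(1)}$ and $\langle-E^2|\xi\rangle^{\tilde{\mathbb P}^3,H}_{g,F,(1)}$ by rubber/relative localization --- possible in principle but strictly harder, and not carried out. Note also that the kernel is genus-dependent (only its $\delta_{g,0}$ part is the ``3''), so no soft argument producing a pair of genus-independent constants could be correct.

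A secondary, repairable inaccuracy: the single degeneration you describe, $X$ into $\tilde X\cup_E\mathbb P^3$ (deformation to the normal cone of the point), handles only $\langle\tau_1[pt],\cdots\rangle^X_{g,A}$. The two invariants of $\tilde X$ carrying $\tau_1E$ and $-E^2$ require cutting $\tilde X$ along $E$, whose positive piece is $\mathbb P_E(N_E\oplus\mathcal O_E)\cong\tilde{\mathbb P}^3$, not $\mathbb P^3$, and the $E$-supported insertions land on that side (the paper's Lemmas \ref{ptlemma5} and \ref{ptlemma7}). Moreover, $\mu=(1)$ together with $\deg\delta^{j_1}=2$ is forced by the dimension constraint \eqref{dimension}, not by $(p^!A-e)\cdot E=1$ alone, which only pins down the fiber component of the class on the positive side.
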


We also consider the blow-up along a curve.

\begin{theorem}\label{curve}
Let $p:\tilde X\rightarrow X$ be the blow-up along a smooth curve $C$ with $\int_Cc_1(X)>0$. Suppose  that $\alpha_1,\cdots,\alpha_m\in H^{>2}(X, {\mathbb Q})$, $1\leq i \leq m$, support away from the curve $C$, and $d_1,\cdots,d_m\in\bZ_{\geqslant 0}$. Then for nonzero $A\in H_2(X, {\mathbb Z})$ and $g\geq 0$, we have
$$
 \langle[C],\tau_{d_1}\alpha_1,\cdots,\tau_{d_m}\alpha_m\rangle^X_{g, A} = \sum_{g_1+g_2=g}\frac{(-1)^{g_1}}{(2g_1 + 1)!\cdot 2^{2g_1}}\langle \tau_{d_1}p^*\alpha_1,\cdots, \tau_{d_m}p^*\alpha_m\rangle^{\tilde{X}}_{g_2, p^!A-e}.
$$
\end{theorem}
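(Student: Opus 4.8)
The plan is to adapt the strategy used for the point blow-up (Theorems \ref{pt1}--\ref{pt2}) to a curve center, the essential new input being a different local Hodge integral. First I would realize the insertion $[C]$ through a deformation to the normal cone of $C$: blowing up $X\times\mathbb{A}^1$ along $C\times\{0\}$ produces a degeneration whose generic fiber is $X$ and whose central fiber is $\tilde X\cup_E P$, where $P=\mathbb{P}(N_{C/X}\oplus\mathcal{O})$ is a $\mathbb{P}^2$-bundle over $C$, $E=\mathbb{P}(N_{C/X})$ is the exceptional divisor of $p$ (the divisor at infinity of $P$), and the two components are glued along $E$. Because the classes $\alpha_1,\dots,\alpha_m$ lie in $H^{>2}(X,\mathbb{Q})$ and are supported away from $C$, they specialize to the pullbacks $p^*\alpha_i$ on the $\tilde X$-component, while the distinguished class $[C]$ is supported along the center and hence contributes on the $P$-component.

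Next I would apply the degeneration formula, expressing $\langle[C],\tau_{d_1}\alpha_1,\dots,\tau_{d_m}\alpha_m\rangle^X_{g,A}$ as a sum over splittings of the genus, of the curve class, and over an intermediate cohomology weight along $E$, of products of relative invariants of $(\tilde X,E)$ and $(P,E)$. The role of the hypothesis $\int_C c_1(X)>0$, together with the degree constraints from $\alpha_i\in H^{>2}$, is to force the $P$-side to be as simple as possible: the $\tilde X$-side must carry the class $p^!A-e$ with genus $g_2$ and the insertions $\tau_{d_i}p^*\alpha_i$, while the $P$-side must consist of fiber-class rubber contributions meeting $E$ in a single point of contact order one. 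Verifying that higher tangencies, higher fiber-degree covers, and nontrivial redistributions of the $\alpha_i$ all drop out is a routine but necessary dimension count.

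The heart of the argument is the evaluation of the $P$-side degenerate contribution. After rigidifying the rubber over $E$ and invoking the absolute/relative correspondence exactly as in the proof of Theorem \ref{pt2}, this contribution reduces to a Hodge integral over $\overline{M}_{g_1}$ governed by the $\lambda_{g_1}$-class coming from the Hodge bundle in the obstruction theory along the normal directions $N_{C/X}$. I expect the genus-$g_1$ term to evaluate to $\tfrac{(-1)^{g_1}}{(2g_1+1)!\,2^{2g_1}}$, equivalently to the closed series
\[
\sum_{g_1\ge 0}\frac{(-1)^{g_1}}{(2g_1+1)!\,2^{2g_1}}\,\lambda^{2g_1+1}=2\sin\!\Big(\frac{\lambda}{2}\Big),
\]
in contrast to the point case, whose analogous series is the square $(2\sin(\lambda/2))^2=2-2\cos\lambda$. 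The single power of $2\sin(\lambda/2)$ is forced by the geometry: the curves of class $e$ are the fibers of $E=\mathbb{P}(N_{C/X})\to C$, which move in a one-dimensional family (reflecting $N_{e/\tilde X}=\mathcal{O}\oplus\mathcal{O}(-1)$ with one trivial translation direction along $C$), whereas in the point case the lines in the exceptional $\mathbb{P}^2$ move in a two-dimensional family and hence contribute the squared factor.

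Assembling the pieces, the $\tilde X$-side delivers $\langle\tau_{d_1}p^*\alpha_1,\dots,\tau_{d_m}p^*\alpha_m\rangle^{\tilde X}_{g_2,\,p^!A-e}$, the degenerate $P$-side delivers the coefficient $\tfrac{(-1)^{g_1}}{(2g_1+1)!\,2^{2g_1}}$, and summing over $g_1+g_2=g$ yields the stated formula. The main obstacle is precisely this third step: one must identify the correct rubber moduli space and its virtual class, isolate the governing Hodge class, and invoke the appropriate Faber--Pandharipande/Mari\~{n}o--Vafa-type evaluation to obtain the closed $2\sin(\lambda/2)$ series; controlling which relative contact profiles survive is the secondary delicate point.
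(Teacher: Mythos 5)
Your overall skeleton---deformation to the normal cone of $C$, the degeneration formula, and a dimension count using $\int_Cc_1(X)>0$ and $\alpha_i\in H^{>2}$ to force contact order one, trivial cohomology weight, and fiber class on the $P$-side---matches the paper's Lemma \ref{curvelemma1} exactly, and your closed series $2\sin(\lambda/2)$ (versus its square in the point case) is the correct answer. But your assembly step has a genuine gap: the degeneration formula produces the \emph{relative} invariants $\langle\tau_{d_1}p^*\alpha_1,\cdots,\tau_{d_m}p^*\alpha_m|\mathbbm 1\rangle^{\tilde X,E}_{g_2,p^!A-e,(1)}$ on the $\tilde X$-side, not the absolute invariants $\langle\tau_{d_1}p^*\alpha_1,\cdots,\tau_{d_m}p^*\alpha_m\rangle^{\tilde X}_{g_2,p^!A-e}$ that appear in the theorem, and nothing in your argument justifies identifying the two; the convolution over $g_1+g_2=g$ in the final formula is precisely a record of this discrepancy. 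The paper closes this gap with a \emph{second} degeneration---cutting $\tilde X$ along $E$ (Lemma \ref{curvelemma2})---which expresses the absolute invariants of $\tilde X$ as a convolution of $\langle\,|[pt]\rangle^{\bP_E(N_E\oplus\cO_E),Z}_{g,F,(1)}$ with the same relative $(\tilde X,E)$ invariants, and then inverts the resulting lower-triangular system (Lemma \ref{curvelemma3}, as in Lemma \ref{ptlemma3}). You do gesture at ``the absolute/relative correspondence as in Theorem \ref{pt2}'', but you deploy it to evaluate the $P$-side rubber contribution, which is not its role; without the second degeneration, your identification of the $\tilde X$-side factor with the absolute invariant is unsupported.

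A consequence of this structure is that the universal coefficient $C_{g_1}=\frac{(-1)^{g_1}}{(2g_1+1)!\,2^{2g_1}}$ is \emph{not} the value of a single rubber/Hodge integral attached to the $P$-side: it is the convolution inverse $I*J^{-1}$ of two series of relative invariants, $I_g=\langle[C]|[pt]\rangle^{\bar X^+,Z}_{g,F,(1)}$ and $J_g=\langle\,|[pt]\rangle^{\bar{\tilde X}^+,Z}_{g,F,(1)}$, neither of which the paper (or you) computes individually. Your plan to obtain the coefficient from one Faber--Pandharipande/Mari\~{n}o--Vafa-type evaluation would therefore compute the wrong object ($I_{g_1}$ alone) unless you also compute $J_g$ and take the ratio---a substantially harder relative-localization computation that you leave unexecuted and flag only as ``the main obstacle''. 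The paper avoids all Hodge-integral work at this stage: since the $C_g$ are universal (independent of $X$, $m$, $\alpha_i$, $A$), it specializes the identity $H_g=\sum C_{g_1}P_{g_2}$ to the model $X=\bP_C(N_C\oplus\cO_C)$, $m=1$, $\alpha_1=[pt]$, $A=F$, where both sides involve only absolute invariants already known from degenerate-contribution computations, namely $\langle[C],[pt]\rangle_{g,F}=\frac{(-1)^g}{(2g+1)!\cdot 2^{2g}}$ and $\langle[pt]\rangle_{g,F}=\delta_{g,0}$ by Theorem 1.5 of \cite{Z} (Lemma \ref{curvelemma4}), and then solves the triangular system for $C_g$. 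If you repair the relative-to-absolute step with the second degeneration and replace your rubber evaluation by this universality-plus-model-geometry trick, your argument becomes the paper's proof.
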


The above blow-up formulae relate Gromov-Witten invariants of $X$ and those of $\tilde{X}$ in a nontrivial way. Theorem \ref{pt1} and \ref{curve} imply the following simple relations among generalized BPS numbers $n^X_{g,A}(\alpha_1, \dots, \alpha_m)$ introduced by Pandharipande \cite{P1,P2}.

\begin{proposition}\label{BPS}
Suppose that $\alpha_1,\cdots,\alpha_m\in H^{>2}(X,\bQ)$, $A\in H_2(X,\bZ)$ is nonzero and $g\in\bZ_{\geqslant 0}$.
\begin{itemize}
\item[(a)]If $p:\tilde X\rightarrow X$ is the blow-up at a point, then we have
$$
n_{g,A}^X([pt],\alpha_1, \cdots, \alpha_m) = n_{g, p^!A-e}^{\tilde{X}}(p^*\alpha_1,\cdots,p^*\alpha_m).
$$
\item[(b)]If $p:\tilde X\rightarrow X$ is the blow-up along a smooth curve $C$ with $\int_Cc_1(X)>0$, then we have
$$
n_{g,A}^X([C],\alpha_1, \cdots, \alpha_m) = n_{g, p^!A-e}^{\tilde{X}}(p^*\alpha_1,\cdots,p^*\alpha_m).
$$
\end{itemize}
\end{proposition}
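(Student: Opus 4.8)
The plan is to recall Pandharipande's definition of the generalized BPS numbers $n^Y_{g,B}(\delta_1,\dots,\delta_m)$, which expresses them as an invertible, genus-triangular transform of the primary Gromov--Witten invariants $\langle\delta_1,\dots,\delta_m\rangle^Y_{g,B}$ whose generating-function kernel is a power of $2\sin(u/2)$ (the Gopakumar--Vafa / degenerate-contribution transform of \cite{P1,P2}). The strategy is then to feed the blow-up formulae of Theorems \ref{pt1} and \ref{curve} into this transform and to check that the kernels attached to the two sides differ by exactly the factor produced by the blow-up, so that the resulting BPS generating functions agree term by term. Since both blow-up formulae use the same insertion rule $\alpha_i\mapsto p^*\alpha_i$, the insertions simply carry through, and only the overall genus kernel needs to be analyzed.

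First I would repackage Theorems \ref{pt1} and \ref{curve} as identities of generating series in a genus variable $u$. The genus sum $g_1+g_2=g$ is a convolution, so it becomes multiplication by the series of blow-up coefficients, and the key elementary computation is to evaluate these in closed form. Using the Taylor expansions of $\cos$ and $\sin$, one obtains $\sum_{g\ge0}\frac{(-1)^g\cdot 2}{(2g+2)!}\,u^{2g}=\left(\frac{2\sin(u/2)}{u}\right)^2$ for the point case and $\sum_{g\ge0}\frac{(-1)^g}{(2g+1)!\,2^{2g}}\,u^{2g}=\frac{2\sin(u/2)}{u}$ for the curve case. Writing $w=2\sin(u/2)$, Theorem \ref{pt1} (resp. Theorem \ref{curve}) then states precisely that the $X$-series equals $(w/u)^2$ (resp. $w/u$) times the $\tilde X$-series.

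Next I would match these factors against the grading carried by the BPS transform, namely the shift in the exponent of the kernel determined by the $c_1$-pairing $\int_B c_1(Y)$. The blow-up shifts this pairing: since $c_1(\tilde X)=p^*c_1(X)-2E$ at a point and $c_1(\tilde X)=p^*c_1(X)-E$ along a curve, and since $p^!A\cdot E=0$ and $e\cdot E=-1$, one finds $\int_{p^!A-e}c_1(\tilde X)=\int_A c_1(X)-2$ in the point case and $\int_A c_1(X)-1$ in the curve case. The decisive point is that the power of $w/u$ produced by the blow-up equals exactly this drop in the $c_1$-pairing; hence, after rewriting both BPS generating functions in the variable $w$, the $(w/u)$-factors cancel and the two series coincide coefficient by coefficient. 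This yields $n^X_{g,A}([pt],\alpha_1,\dots,\alpha_m)=n^{\tilde X}_{g,p^!A-e}(p^*\alpha_1,\dots,p^*\alpha_m)$ and, in the curve case, $n^X_{g,A}([C],\alpha_1,\dots,\alpha_m)=n^{\tilde X}_{g,p^!A-e}(p^*\alpha_1,\dots,p^*\alpha_m)$.

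The main obstacle is to control the multiple-cover (divisibility) contributions to the BPS transform and to fix Pandharipande's exact normalization. On the $\tilde X$-side this is clean: because $(p^!A-e)\cdot E=1$, the class $p^!A-e$ is primitive, so no multiple-cover terms enter and the transform reduces to the single genus-triangular change of variables above. On the $X$-side the class $A$ may be imprimitive, so here one must use the hypothesis $\alpha_i\in H^{>2}(X,\mathbb Q)$ together with the extra point or curve insertion to rule out contributions from proper subclasses, so that the term-by-term matching is not spoiled. Verifying that the grading shift matches the blow-up kernel power exactly---which is ultimately a consequence of the dimension bookkeeping of the blow-up---is the step that requires the most care.
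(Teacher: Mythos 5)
Your proposal is correct and follows essentially the same route as the paper: the paper likewise repackages Theorem \ref{pt1} (resp.\ Theorem \ref{curve}) as a generating-function identity (Corollary \ref{generating function} and its curve analogue), feeds it into Pandharipande's transform, and uses $\int_{p^!A-e}c_1(\tilde X)=\int_Ac_1(X)-2$ (resp.\ $\int_Ac_1(X)-1$) so that the $\bigl(\sin(u/2)/(u/2)\bigr)$-factors cancel and the BPS coefficients can be matched term by term. The only discrepancy is that your ``main obstacle'' is a phantom: in Pandharipande's definition the multiple-cover sum enters only in the no-insertion case with $\int_Ac_1(X)=0$, so with at least one insertion the transform is the plain genus-triangular one on both sides, and no primitivity of $p^!A-e$ or exclusion of proper subclasses is needed.
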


Our proof of the above blow-up formulae is inspired by the absolute/relative correspondence obtained by Hu-Li-Ruan \cite{HLR}, which is a generalization of the idea of Maulik-Pandharipande \cite{MP}. This correspondence partially describes the change of Gromov-Witten invarians under blow-ups. We first use degeneration formula to obtain comparison results between absolute and relative Gromov-Witten invariants,
and then use these comparison results to prove our blow-up formulae.

The rest of the paper is organized as follows. In Section 2, we briefly review basic materials of absolute/relative Gromov-Witten invariants and the degeneration formula. In Section 3, we consider the case of blow-up at a point and prove Theorem \ref{pt1}, \ref{pt2} and \ref{pt3}. In Section 4, we consider the case of blow-up along a smooth curve and prove Theorem \ref{curve}. In Section 5, we review the definition of generalized BPS numbers and prove Corollary \ref{BPS}.

\section{Preliminaries}

In this section, we briefly review absolute/relative Gromov-Witten invariants and the
degeneration formula and fix notations throughout. We use \cite{LR} as our general reference.

Recall that we always let $X$ be a connected compact smooth symplectic manifold of real dimension six. For $A\in H_2(X,\bZ)$, let $\overline{\mathcal M}_{g,m}(X,A)$ be
the moduli space of connected $m$-pointed stable maps to $X$ of arithmetic genus $g$ and degree $A$. Let $e_i: \overline{\mathcal M}_{g,m}(X,A)\longrightarrow X$ be the evaluation map at the $i^{th}$ marked point. The Gromov-Witten invariants of $X$ are defined as
$$
\langle \tau_{d_1}\alpha_1,\cdots,\tau_{d_m}\alpha_m\rangle^X_{g,A}
:=\int_{[\overline{\mathcal
M}_{g,m}(X,A)]^{vir}}\prod\limits_{i=1}^m\psi_i^{d_i}e_i^*\alpha_i,
$$
where $\alpha_1,\cdots,\alpha_m\in H^*(X,\bQ)$, $d_1,\cdots,d_m\in\bZ_{\geqslant 0}$, $\psi_i$ is the first Chern class of the cotangent line bundle, and $[\overline{\mathcal
M}_{g,m}(X,A)]^{vir}$ is the virtual fundamental cycle.

The degeneration formula \cite{LR, IP, Li} provides a rigorous
formulation about the change of Gromov-Witten invariants under
semi-stable degeneration, or symplectic cutting. The formula
relates  the absolute Gromov-Witten invariant of $X$ to the
relative Gromov-Witten invariants of two smooth pairs.

Now we recall the relative invariants of a smooth relative pair $(X,Z)$
with $Z\hookrightarrow X$ a connected smooth symplectic divisor. Let $A\in
H_2(X,{\mathbb Z})$ with $A\cdot Z\geqslant 0$, and $\mu$ a partition of
$A\cdot Z$. We
customarily use relative graphs to describe the topological
type of relative stable maps. A connected relative graph $\Gamma =
(g,m,A,\mu)$ is defined to be a connected decorated graph
consisting of the following data:
\begin{enumerate}
\item[(1)] a vertex decorated by $A$ and
genus $g$;
\item[(2)] $m$ tails with no decoration;
\item[(3)] $\ell(\mu)$ tails decorated by entries of $\mu$.
\end{enumerate}
A connected relative stable map has topological type $\Gamma$ if it has arithmetic genus $g$, degree $A$, $m$ absolute marked points and $\ell(\mu)$ relative marked points with contact order given by $\mu$. Let $\overline{\mathcal M}_\Gamma (X,Z)$ be the moduli space of connected
relative stable maps with topological type $\Gamma$. Let $e_i: \overline{\mathcal M}_{\Gamma}(X,Z)\longrightarrow X$ be the evaluation map at the $i^{th}$ absolute marked point, and $e_j^Z:\overline{\mathcal M}_{\Gamma}(X,Z)\longrightarrow Z$ the evaluation map at the $j^{th}$ relative marked point. The relative Gromov-Witten invariants of $(X,Z)$ are of the form
$$
\langle \tau_{d_1}\alpha_1,\cdots,\tau_{d_m}\alpha_m\mid \delta_1,\cdots,\delta_{\ell(\mu)}\rangle_{\Gamma}^{X,Z} :=
\int_{[\overline{\mathcal M}_\Gamma (X,Z)]^{vir}}\prod_{i=1}^m\psi_i^{d_i}e_i^*\alpha_i\cdot\prod\limits_{j=1}^{\ell(\mu)}(e^Z_j)^*\delta_i,
$$
where $\alpha_1,\cdots,\alpha_m\in H^*(X,\bQ), d_1,\cdots,d_m\in\bZ_{\geqslant 0}, \delta_1,\cdots,\delta_{\ell(\mu)}\in H^*(Z,\bQ)$, and $[\overline{\mathcal M}_\Gamma (X,Z)]^{vir}$ is the virtual fundamental cycle of dimension:
$$
\dim [\overline{\mathcal M}_\Gamma (X,Z)]^{vir}=2\int_Ac_1(X) +2m+2\ell (\mu)-2|\mu|.
$$
The relative invariants with disconnceted domains are defined by the usual product rule, and the invariants will be denoted by $\<\cdots|\cdots\>_\Gamma^{\bullet X,Z}$.

Next, we shall introduce the degeneration formula. Let $\pi : \chi \longrightarrow D$ be a connected, smooth symplectic manifold of real dimension eight over a
 disk $D$ such that $\chi_t = \pi^{-1}(t) \cong X $ for $t\not= 0$ and $\chi_0$ is a union of two connected compact smooth symplectic manifolds $X_1$
and $X_2$ intersecting transversally along a symplectic divisor $Z$. We write $\chi_0 = X_1\cup_Z X_2$.

Consider the natural inclusion maps
$$
  i_t: X=\chi_t \longrightarrow \chi,\,\,\,\,\,\,\,\,
  i_0:\chi_0\longrightarrow \chi,
$$
and the gluing map
$$
  g= (j_1,j_2) : X_1\coprod X_2\longrightarrow \chi_0.
$$
We have
$$
  H_2(X,\bZ)\stackrel{i_{t*}}{\longrightarrow}
  H_2(\chi,\bZ)\stackrel{i_{0_*}}{\longleftarrow}
  H_2(\chi_0,\bZ)\stackrel{g_*}{\longleftarrow} H_2(X_1,\bZ)\oplus
  H_2(X_2,\bZ),
$$
where $i_{0*}$ is an isomorphism since there exists a deformation retract from $\chi$ to $\chi_0$(see \cite{C}). Also, since the family $\chi\longrightarrow D$ comes from a trivial family, it follows that each $\alpha\in H^*(X,\bQ) $ has
global liftings such that the restriction $\alpha(t)$ on $\chi_t$ is defined for all $t$.

Fix a basis $\{\delta_i\}$ of $H^*(Z,\bQ)$ and denote by $\{\delta^i\}$ its dual basis. The degeneration formula expresses the absolute invariants of $X$ in terms of the relative invariants of the two smooth pairs $(X_1,Z)$ and $(X_2,Z)$:
\begin{eqnarray}\label{degeneration formula}
 & & \langle \tau_{d_1}\alpha_1,\cdots,\tau_{d_m}\alpha_m\rangle^X_{g,A}\\
& = & \sum_\mu {\mathfrak z}(\mu)\sum\limits_{i_1,\cdots,i_{\ell(\mu)}}\sum_{\eta\in\Omega_\mu}\langle
  \tau_{d_{i^-_1}}j_1^*\alpha_{i^-_1}(0),\cdots,\tau_{d_{i^-_{k_1}}}j_1^*\alpha_{i^-_{k_1}}(0)\mid \delta_{i_1},\cdots,\delta_{i_{\ell(\mu)}}\rangle^{\bullet
  X_1,Z}_{\Gamma_1}\nonumber\\
  & &\,\, \cdot\,\,\langle \tau_{d_{i^+_1}}j_2^*\alpha_{i^+_1}(0),\cdots,\tau_{d_{i^+_{k_2}}}\alpha_{i^+_{k_2}}(0)\mid{\delta}^{i_1},\cdots,\delta^{i_{\ell(\mu)}}\rangle^{\bullet X_2,Z}_{\Gamma_2}, \nonumber
\end{eqnarray}
where ${\mathfrak z}(\mu)=|\mbox{Aut} \mu|\prod\limits_{i=1}^{\ell(\mu)}\mu_i$, and $\eta = (\Gamma_1,\Gamma_2, I)$ is an admissible triple, which consists of (possibly disconnected) topological types $\Gamma_1,\Gamma_2$ with the same partition $\mu$ under the identification $I$ of relative marked points, satisfying the following requirements:
\begin{itemize}
\item[(1)] the gluing of $\Gamma_1$ and $\Gamma_2$ under $I$ is connected;
\item[(2)] let $g_i$ be the total genus of $\Gamma_i$, and we have
$g=g_1+g_2+\ell(\mu)+1-|\Gamma_1| -|\Gamma_2|$, where $|\Gamma_i|$ is the number of connected components of $\Gamma_i$;
\item[(3)] let $A_i\in H_2(X_i,\bZ)$ be the total degree of $\Gamma_i$, and we have $i_{t*}A=i_{0*}(j_{1*}A_1+j_{2*}A_2)$ and $|\mu|=A_1\cdot Z=A_2\cdot Z$;
\item[(4)] the absolute marked points of $\Gamma_1,\Gamma_2$ are indexed by $\{i_1^-,\cdots,i_{k_1}^-\}$ and $\{i_1^+,\cdots$, $i_{k_2}^+\}$ respectively, the disjoint union of which is exactly $\{1,2,\cdots, m\}$.
\end{itemize}
We denote by $\Omega_\mu$ the equivalence class
of all admissible triples with fixed partition $\mu$. For $\eta\in\Omega_\mu$ having nonzero contribution in the degeneration formula, we have the following important dimension constraint ({\bf Theorem 5.1 in \cite{LR}}):
\begin{equation}\label{dimension}
       \dim \overline{\mathcal M}_{\Gamma_1}(X_1,Z) + \dim \overline{\mathcal M}_{\Gamma_2}(X_2,Z) = \dim \overline{\mathcal M}_{g,m}(X,A) + 4\ell(\mu).
\end{equation}

\begin{remark}\label{symplectic cut}
Symplectic cutting is a kind of surgery in symplectic geometry which is suitable for the above degeneration formula (see \cite{LR}). Suppose that $X_0\subset X$ is an open codimension zero submanifold with Hamiltonian $S^1$-action. Let $H: X_0\longrightarrow {\mathbb R}$ be a Hamiltonian function with $0$ as a regular value. If $H^{-1}(0)$ is a separating hypersurface of $X_0$, then we obtain two connected manifolds $X^\pm_0$ with boundary
$\partial X^\pm_0= H^{-1}(0)$, where the $+$ side corresponds to $H<0$. Suppose further that $S^1$ acts freely on $H^{-1}(0)$. Then the symplectic reduction $Z=H^{-1}(0)/S^1$ is canonically a symplectic manifold. Collapsing the $S^1$-action on $\partial X^\pm = H^{-1}(0)$, we obtain two closed smooth manifolds $\bar{X}^\pm$ containing respectively real codimension 2 submanifolds $Z^\pm =Z$ with opposite normal bundles. Furthermore
$\bar{X}^\pm$ admits a symplectic structure $\bar{\omega}^\pm$ which agrees with the restriction of $\omega$ away from $Z$, and whose restriction to $Z^\pm$ agrees with the canonical symplectic structure $\omega_Z$ on $Z$ from symplectic reduction. The pair of symplectic manifolds $(\bar{X}^\pm,\bar{\omega}^\pm)$ is called the symplectic cut of $X$ along $H^{-1}(0)$.

Suppose that $Y\subset X$ is a submanifold of $X$ of codimension $2k$. Denote by $N_Y$ the normal bundle. By the symplectic neighborhood theorem ,and by possibly taking a smaller $\epsilon_0$, a tubular neighborhood ${\mathcal N}_{\epsilon_0}(Y)$ of $Y$ in $X$ is symplectomorphic to the disc bundle $N_Y(\epsilon_0)$ of $N_Y$. Denote by $\phi: {\mathcal N}_{\epsilon_0}(Y) \longrightarrow N_Y(\epsilon_0)$ be such a symplectomorphism. Consider the Hamiltonian $S^1$-action on $X_0={\mathcal N}_{\epsilon_0}(Y)$ by complex multiplication. Fix $\epsilon$ with $0<\epsilon <\epsilon_0$ and consider the moment map
$$
                 H(u) = |\phi(u)|^2 - \epsilon, \,\,\,\,\, u\in {\mathcal N}_Y(\epsilon_0),
$$
where $|\phi (u) |$ is the norm of $\phi(u)$ considered as a vector in a fiber of the Hermitian bundle $N_Y$. We cut $X$ along $H^{-1}(0)$ to obtain two closed symplectic manifolds $\bar{X}^\pm$. Notice that $\bar{X}^+\cong
{\mathbb P}_Y(N_Y\oplus\cO_Y)$. $\bar{X}^-$ is called the blow-up of $X$ along $Y$, denoted by $\tilde{X}$.
\end{remark}

\section{Formulae for Blow-up at a point}

In this section, we prove Theorem \ref{pt1}, \ref{pt2} and \ref{pt3}. We always assume that total degrees of insertions match the virtual dimension of the moduli spaces, since otherwise the required equalities are trivial.

First of all, we will divide the proof of Theorem \ref{pt1} into some comparison theorems of Gromov-Witten invariants as follows.

\begin{lemma}\label{ptlemma1}
Under the same assumptions as in Theorem \ref{pt1}, we have
\begin{eqnarray}\label{3-1}
     & & {\langle[pt],\tau_{d_1}\alpha_1,\cdots,\tau_{d_m}\alpha_m\rangle}^X_{g,A}\nonumber\\
     & & \nonumber\\
     & = &  \sum_{g^++g^-=g} {\langle [pt]|[pt]\rangle}^{\mathbb P^3,H}_{g^+,L,(1)}
     {\langle\tau_{d_1}p^*\alpha_1,\cdots,\tau_{d_m}p^*\alpha_m|\mathbbm 1\rangle}^{\tilde X,E}_{g^-,p^!A-e,(1)},
\end{eqnarray}
where $H$ is the hyperplane at infinity, and $L\in H_2(\mathbb P^3;\Z)$ is the class of a line.
\end{lemma}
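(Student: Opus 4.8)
The plan is to apply the degeneration formula \eqref{degeneration formula} to the blow-up of $X$ at a point, realized as a symplectic cut. By Remark \ref{symplectic cut}, cutting $X$ along a small sphere around the point $\mathrm{pt}$ degenerates $X$ into $\tilde X \cup_E \bar X^+$, where $\bar X^+ \cong \mathbb P_{\mathrm{pt}}(N \oplus \mathcal O) \cong \mathbb P^3$ (since the normal bundle of a point in a six-manifold is $\mathbb C^3$), and the common divisor is $E = Z \cong \mathbb P^2$, the exceptional divisor. The class $L \in H_2(\mathbb P^3,\mathbb Z)$ of a line restricts to $e$ on the $\tilde X$ side.

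**The key steps,** in order: First I would arrange the geometry so that the point class $[\mathrm{pt}]$ is supported inside the cut region $\mathcal N_\epsilon(\mathrm{pt})$, while the remaining insertions $\alpha_1,\dots,\alpha_m$ are supported away from it; under the global lifting discussed before \eqref{degeneration formula}, this forces the $\alpha_i(0)$ to be pushed to the $\tilde X$ side as $p^*\alpha_i$, and $[\mathrm{pt}]$ to the $\mathbb P^3$ side. Second, I would expand the right-hand side of \eqref{degeneration formula}, summing over partitions $\mu$ of $(p^!A - e)\cdot E = 1$; since $1$ admits only the trivial partition $\mu = (1)$, we get $\ell(\mu)=1$, $\mathfrak z(\mu)=1$, and a single relative marked point. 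Third, I would identify the admissible triples $\eta = (\Gamma_1, \Gamma_2, I)$ with nonzero contribution using the dimension constraint \eqref{dimension} together with the homology-splitting condition (3): the $\tilde X$-factor must carry class $p^!A - e$ and the $\mathbb P^3$-factor must carry the line class $L$. Fourth, I would insert the dual-basis sum over $\{\delta_i\}$ of $H^*(E,\mathbb Q) = H^*(\mathbb P^2,\mathbb Q)$ and show that the point insertion $[\mathrm{pt}]$ on the $\mathbb P^3$ side, paired against the dimension count, selects exactly the relative condition $\langle [\mathrm{pt}] \mid [\mathrm{pt}]\rangle^{\mathbb P^3, H}_{g^+, L, (1)}$ with the matching dual class $\mathbbm 1$ appearing on the $\tilde X$ side. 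Finally, the genus splitting condition (2) for connected gluing along a single relative point reduces to $g = g^+ + g^- $, which is exactly the sum in the claim.

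**The main obstacle** I expect is a careful bookkeeping argument that \emph{no other} admissible triple contributes: I must rule out configurations where the curve sends positive degree into $\bar X^+ \cong \mathbb P^3$ beyond the single line class $L$, or where the components disconnect in a way that survives the dimension constraint. Here the hypotheses that $A \neq 0$, that $(p^!A - e)\cdot E = 1$ fixes the partition, and the explicit dimension formula \eqref{dimension} must be combined to eliminate the unwanted graphs — in particular ensuring the $\mathbb P^3$ side carries no absolute insertions other than $[\mathrm{pt}]$ and exactly the class $L$. A secondary point is justifying why the only surviving $\mathbb P^3$-relative invariant is the one with relative contact class $[\mathrm{pt}]$ (dual to $\mathbbm 1$ on $E$); this is where the explicit geometry of a line in $\mathbb P^3$ meeting the hyperplane $H$ in one point, and the degree-reasons forcing $\delta_{i_1} = [\mathrm{pt}]$ with $\delta^{i_1} = \mathbbm 1$, must be made precise. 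Once these vanishing statements are established, reassembling the surviving terms yields \eqref{3-1} directly.
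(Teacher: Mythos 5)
Your overall strategy (symplectic cut at the point, pushing $[pt]$ to the $\mathbb P^3$ side and the $\alpha_i$ to the $\tilde X$ side, then invoking the dimension constraint) is the same as the paper's, but your second step is circular, and this leaves a genuine gap. You claim the degeneration formula sums ``over partitions $\mu$ of $(p^!A-e)\cdot E=1$,'' so that $\mu=(1)$ is forced from the outset. This presupposes that the curve class on the $\tilde X$ side is $p^!A-e$, which is precisely part of what the lemma asserts. The degeneration formula \eqref{degeneration formula} sums over \emph{all} admissible triples: condition (3) only requires that the two classes glue to $A$ and that $A^+\cdot H=A^-\cdot E=|\mu|$, and this admits infinitely many solutions, namely $A^+=kL$, $A^-=p^!A-ke$ with $\mu$ any partition of $k$, for every $k\geq 1$ (both $L$ and $e$ map to zero in $H_2(X,\bZ)$, so the gluing condition cannot distinguish these splittings). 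Eliminating all of these is not a secondary ``obstacle'' to be dealt with after fixing $\mu=(1)$; it is the entire content of the proof, and your proposal never actually carries it out. Your step 3 likewise asserts that condition (3) ``must'' put class $L$ on the $\mathbb P^3$ factor, which it does not.

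The paper's mechanism is the dimension constraint \eqref{dimension}, run \emph{before} any splitting is fixed: since $A^+\cdot H=|\mu|$ forces $A^+=|\mu|L$ and hence $\int_{A^+}c_1(\bP^3)=4|\mu|$, the constraint reduces to $\frac{1}{2}\sum_{i}\deg\delta^{j_i}+3|\mu|=2+\ell(\mu)$. Since $\ell(\mu)\leq|\mu|$ and $\deg\delta^{j_i}\geq 0$, this forces $2|\mu|\leq 2$, and $|\mu|=0$ is impossible, so $\mu=(1)$ and $\deg\delta^{j_1}=0$; only then are the splitting $A^+=L$, $A^-=p^!A-e$, the relative insertions $[pt]\,|\,\mathbbm 1$, and the genus additivity $g=g^++g^-$ determined. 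To repair your proof, reverse the logical order: sum over all $(A^+,A^-,\mu)$, compute the two virtual dimensions against the assumption that the insertions match $\dim\overline{\mathcal M}_{g,m+1}(X,A)$, and let the resulting equation kill every term with $|\mu|>1$ or $\deg\delta^{j_1}>0$.
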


\begin{proof}
We first perform the symplectic cutting along a point as in Remark \ref{symplectic cut}. Here we have assumed that the class $[pt]$ has support in $X^+$ and $\alpha_i$ has support in $X^-$. By the degeneration formula \eqref{degeneration formula},we have
\begin{eqnarray*}
    &   &\langle[pt],\tau_{d_1}\alpha_1,\cdots,\tau_{d_m}\alpha_m\rangle^X_{g,A}\\
    & = &\sum {\mathfrak z}(\mu)\langle [pt]|\delta_{j_1},\cdots,\delta_{j_{\ell(\mu)}}\rangle^{\mathbb P^3,H}_{g^+,A^+,\mu}\\
    &  & \cdot\langle\tau_{d_1}p^*\alpha_1,\cdots,\tau_{d_m}p^*\alpha_m|\delta^{j_1},\cdots,\delta^{j_{\ell(\mu)}}\rangle^{\tilde X,E}_{g^-,A^-,\mu}.
\end{eqnarray*}
 By our assumption that total degrees of insertions match the virtual dimension of the moduli space, we have
$$
     \dim \bar{\mathcal M}_{g,m+1}(X,A) = \sum_{i=1}^m \deg\alpha_i+2\sum_{i=1}^md_i+6.
$$

Suppose that $(\Gamma^+,\Gamma^-)$ has nonzero contribution in the degeneration formula. Then
\begin{eqnarray*}
   \dim \bar{\mathcal M}_{\Gamma^+}(\mathbb P^3,H) & = &  2\int_{A^+}c_1(\bP^3)+2+2\ell(\mu)-2|\mu|,\\
   \dim \bar{\mathcal M}_{\Gamma^-}(\tilde X,E) & = &\sum_{i=1}^m \deg\alpha_i+2\sum_{i=1}^md_i+\sum\limits_{i=1}^{\ell(\mu)}\deg\delta^{j_i}.
\end{eqnarray*}
So by the dimension constraint \eqref{dimension},
$$
     \frac{1}{2}\sum\limits_{i=1}^{\ell(\mu)}deg\delta^{j_i}+\int_{A^+}c_1(\bP^3)-|\mu|=2+\ell(\mu).
$$
Note that $A^+\cdot H=|\mu|$, and hence $A^+=|\mu|L$, which implies that
\ban
\int_{A^+}c_1(\bP^3)=4|\mu|.
\nan
Now the dimension constraint becomes
\ban
\frac 12\sum\limits_{i=1}^{\ell(\mu)}deg\delta^{j_i}+3|\mu|=2+\ell(\mu).
\nan
So the dimension constraint holds only if
\ban
\mu=(1),\quad deg\delta^{j_1}=0,
\nan
which implies the required equality.
\end{proof}

\begin{lemma}\label{ptlemma2}
Under the same assumptions as in Theorem \ref{pt1}, we have
\begin{eqnarray}\label{3-2}
& & {\langle \tau_{d_1}p^*\alpha_1,\cdots ,\tau_{d_m}p^*\alpha_m\rangle}^{\tilde X}_{g,p^!A-e}\nonumber \\
&=& \sum_{g^++g^-=g}{\langle \,\,|[pt]\rangle}_{g^+,F,(1)}^{\tilde{\mathbb P}^3,H} \\
& & \cdot{\langle \tau_{d_1}p^*\alpha_1,\cdots, \tau_{d_m}p^*\alpha_m|\mathbbm 1\rangle}^{\tilde X,E}_{g^-,p^!A-e,(1)},\nonumber
\end{eqnarray}
where $F\in H_2(\tilde\bP^3,\bZ)$ is the class of a fiber in $\tilde{\mathbb P}^3\cong{\mathbb P}_{\bP^2}(\mathcal O\oplus\mathcal O(-1))$.
\end{lemma}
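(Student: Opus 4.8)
The plan is to mirror the proof of Lemma \ref{ptlemma1}, but now degenerating $\tilde X$ itself rather than $X$. Concretely, I would apply the symplectic cut of Remark \ref{symplectic cut} to $\tilde X$ along the exceptional divisor $E$ (equivalently, deformation to the normal cone of $E$). Since $E\cong\bP^2$ has normal bundle $N_E=\cO(-1)$, this yields the central fibre $\tilde X\cup_E\bar X^+$ with $\bar X^+=\bP_E(N_E\oplus\cO)=\tilde\bP^3$ and $\bar X^-=\tilde X$ (blowing up along a divisor is trivial), glued along $E$, which is exactly the section at infinity $H\subset\tilde\bP^3$. Feeding this into the degeneration formula \eqref{degeneration formula} expresses $\langle\cdots\rangle^{\tilde X}_{g,p^!A-e}$ as a sum of products of relative invariants of $(\tilde\bP^3,H)$ and $(\tilde X,E)$.

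First I would check that every absolute insertion lands on the $\tilde X$ side: the global lifting of $p^*\alpha_i$ restricts on the bubble to $\pi^*\!\big(p^*\alpha_i|_E\big)$, and $p^*\alpha_i|_E=(p|_E)^*\alpha_i=0$ because $p(E)$ is a point and $\deg\alpha_i>0$; so any admissible triple sending some $\alpha_i$ to $\tilde\bP^3$ contributes zero. Then I would run the dimension count as in Lemma \ref{ptlemma1}. Writing an effective class of $\tilde\bP^3$ as $A^+=aF+bR$ with $a,b\ge0$, where $R$ is the class of a line in the $\cO(-1)$-section, one has $\int_Fc_1(\tilde\bP^3)=\int_Rc_1(\tilde\bP^3)=2$, $F\cdot H=1$ and $R\cdot H=0$, so $|\mu|=A^+\cdot H=a$. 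The $\tilde\bP^3$-factor, carrying no absolute insertions, is nonzero only if $2\int_{A^+}c_1(\tilde\bP^3)+2\ell(\mu)-2|\mu|=\sum_i\deg\delta_{j_i}\le4\ell(\mu)$, which simplifies to $a+2b\le\ell(\mu)\le|\mu|=a$. Hence $b=0$ and $\ell(\mu)=|\mu|=a$, so $\mu=(1^a)$ and each relative insertion on the $\tilde\bP^3$ side is the point class $[pt]$, paired dually with $\mathbbm 1$ on the $\tilde X$ side.

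The crux, I expect, is the homology bookkeeping that improves $\mu=(1^a)$ to $\mu=(1)$. Under the retraction of the central fibre onto $\tilde X$ the fibre $F$ maps to $0\in H_2(\tilde X)$, since a bundle fibre over $E$ collapses to a point; therefore the total-class requirement of the degeneration formula reads $p^!A-e=A^-+aF\mapsto A^-$, i.e.\ $A^-=p^!A-e$. Matching contact orders then gives $|\mu|=A^-\cdot E=(p^!A-e)\cdot E=1$, so $a=1$ and $\mu=(1)$. With a single relative point the glued curve is connected only if both sides are, forcing $|\Gamma^\pm|=1$ and $g=g^++g^-$; finally $\mathfrak z((1))=1$ and the sum over the dual basis collapses to the single pairing $[pt]\leftrightarrow\mathbbm 1$, which is precisely \eqref{3-2}. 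I expect this homology identification to be the delicate point: the dimension count by itself is symmetric in the two sides and does not pin down $\mu$, and it is exactly the collapse $F\mapsto0$, forcing $A^-=p^!A-e$, that makes the formula close up.
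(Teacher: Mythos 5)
Your proposal is correct and follows essentially the same route as the paper: symplectic cutting of $\tilde X$ along $E$ (deformation to the normal cone), the degeneration formula, and then dimension and homology constraints that force $\mu=(1)$, $A^+=F$, and the relative insertion to be $[pt]$ on the $\tilde\bP^3$ side paired with $\mathbbm 1$ on the $\tilde X$ side. The only cosmetic difference is the order of the two constraints: the paper first uses the homological conditions $A^+\cdot H=|\mu|$ and $A^+\cdot E=1$ to write $A^+=F+(|\mu|-1)L$ and then applies the dimension constraint, whereas you run the degree count on the $(\tilde\bP^3,H)$ side first (using effectiveness of $A^+$) and then use the collapse $F\mapsto 0$, $A^-=p^!A-e$ to pin down $a=1$.
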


\begin{proof}
We perform symplectic cutting along $E$ in $\tilde X$  as in Remark \ref{symplectic cut}. Here we also assumed that the class $p^*\alpha_i$ has support away from $E$.
By the degeneration formula (\ref{degeneration formula}), we have
\begin{eqnarray}\label{3-3}
& & {\langle \tau_{d_1}p^*\alpha_1, \cdots, \tau_{d_m}p^*\alpha_m\rangle}^{\tilde X}_{g,p^!A-e}\nonumber\\
&=&\sum {\mathfrak z}(\mu){\langle \,\,|\delta_{j_1},\cdots,\delta_{j_{\ell(\mu)}}\rangle}
         _{g^+,(p!(A)-e)^+,\nu}^{\tilde{\mathbb P}^3,H} \\
& & \cdot{\langle \tau_{d_1}p^*\alpha_1,\cdots, \tau_{d_m}p^*\alpha_m|\delta^{j_1},\cdots,\delta^{j_{\ell(\mu)}}\rangle}
         ^{\tilde X,E}_{g^-,(p^!A-e)^-,\mu}.\nonumber
\end{eqnarray}
 By our assumption that degrees match the virtual dimension, we have
\ban
\dim_{\mathbb C}\bar{\mathcal M}_{g,m}(\tilde X,p^!A-e)&=&\frac 12\sum_{i=1}^m deg\alpha_i+\sum_{i=1}^md_i.
\nan

Suppose that a term with $(\Gamma^+,\Gamma^-)$ has nonzero contribution in RHS of the degeneration formula (\ref{3-3}). Then
$$
 \dim_{\mathbb C}\bar{\mathcal M}_{\Gamma^+}(\tilde{\mathbb P}^3,H) = \int_{(p^!A-e)^+}c_1(\tilde{\bP}^3)+\ell(\mu)-|\mu|,
$$
$$
 \dim_{\mathbb C}\bar{\mathcal M}_{\Gamma^-}(\tilde X,E) = \frac 12\sum_{i=1}^mdeg\alpha_i+\sum_{i=1}^md_i+\frac 12\sum_{i=1}^{\ell(\mu)}deg\delta^{j_i}.
$$
So by the dimension constraint \eqref{dimension},
\ban
\frac 12\sum\limits_{i=1}^{\ell(\mu)}deg\delta^{j_i}+\int_{(p^!A-e)^+}c_1(\tilde{\bP}^3)-|\mu|=\ell(\mu).
\nan

Let $L\in H_2(\tilde\bP^3,\bZ)$ be the class of the total transform of a line in $\bP^3$. Then we have the following natural decomposition
\ban
H_2(\tilde\bP^3,\bZ)=\bZ F\oplus\bZ L.
\nan
We have the following constraints for $(p^!A-e)^+$:
$$
    {(p!(A)-e)}^+\cdot H = |\mu|,   \,\,\,\,\,    (p^!(A)-e)^+\cdot E = 1.
$$
So we have $(p^!A-e)^+=F+(|\mu|-1)L$, and hence $\int_{(p^!A-e)^+}c_1(\tilde\bP^3)=4|\mu|-2$. Now the dimension constraint becomes
\ban
\frac 12\sum\limits_{i=1}^{\ell(\mu)}deg\delta^{j_i}+3|\mu|=2+\ell(\mu).
\nan
So the dimension constraint holds only if
\ban
\mu=(1),\quad deg\delta^{j_1}=0,
\nan
which implies the required equality.
\end{proof}

Using the above comparison results we may obtain the following absolute/relative correspondence for Gromov-Witten invariants under blow-up.
\begin{lemma}\label{ptlemma3}
Under the same assumptions as in Theorem \ref{pt1}, denote $\langle[pt],\tau_{d_1}\a_1,$ $\cdots,\tau_{d_m}\a_m\rangle^X_{g,A}$ and
$\langle \tau_{d_1}p^*\a_1,\cdots,\tau_{d_m}p^*\a_m\rangle^{\tilde X}_{g,p^!A-e}$ by
$H_g$ and $P_g$ respectively. Then
\begin{equation}\label{3-4}
      H_g=\sum_{g_1+g_2=g}C_{g_1}P_{g_2},
\end{equation}
where $C_g$'s can be determined by relative invariants ${\langle [pt]|[pt]\rangle}^{\mathbb P^3,H}_{g,L,(1)}$
 and ${\langle \,\,|[pt]\rangle}_{g,F,(1)}^{\tilde{\mathbb P}^3,H}$.
\end{lemma}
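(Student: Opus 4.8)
The plan is to eliminate the unknown relative invariants shared by Lemmas \ref{ptlemma1} and \ref{ptlemma2} through a formal power series inversion in the genus variable. Write $Q_g := \langle\tau_{d_1}p^*\alpha_1,\cdots,\tau_{d_m}p^*\alpha_m\mid\mathbbm 1\rangle^{\tilde X,E}_{g,p^!A-e,(1)}$ for the common relative invariant appearing on the right-hand sides of both lemmas, and set $a_g := \langle[pt]\mid[pt]\rangle^{\mathbb P^3,H}_{g,L,(1)}$ and $b_g := \langle\,\,\mid[pt]\rangle^{\tilde{\mathbb P}^3,H}_{g,F,(1)}$. Since in both lemmas the genera add as $g^++g^-=g$, the two identities read
$$H_g = \sum_{g_1+g_2=g} a_{g_1}\,Q_{g_2}, \qquad P_g = \sum_{g_1+g_2=g} b_{g_1}\,Q_{g_2}.$$

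First I would encode these into generating functions in a formal variable $x$: set $\mathbf H(x)=\sum_{g\geq 0} H_g x^g$, and similarly define $\mathbf P,\mathbf Q,\mathbf a,\mathbf b$. The convolution structure of both identities then turns into multiplication in the ring $\mathbb Q[[x]]$ of formal power series,
$$\mathbf H = \mathbf a\cdot\mathbf Q, \qquad \mathbf P = \mathbf b\cdot\mathbf Q.$$

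The key step is to invert $\mathbf b$. A power series in $\mathbb Q[[x]]$ is a unit exactly when its constant term is nonzero, so the crux is to check that $b_0 = \langle\,\,\mid[pt]\rangle^{\tilde{\mathbb P}^3,H}_{0,F,(1)}\neq 0$. Geometrically this genus-zero invariant counts rational curves in the fiber class $F$ of the $\mathbb P^1$-bundle $\tilde{\mathbb P}^3\cong\mathbb P_{\mathbb P^2}(\mathcal O\oplus\mathcal O(-1))$ that meet the divisor $H$ with contact order one at a prescribed point; a fiber meets the section $H$ transversally in a single point, and exactly one fiber passes through the given point of $H$, so the count is $1$. I expect this base-case nonvanishing—which guarantees that $\mathbf b$ is a unit—to be the point that most deserves care, although it reduces to a direct geometric count.

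Granting $b_0\neq 0$, the series $\mathbf b$ is invertible, so I would solve $\mathbf Q = \mathbf b^{-1}\mathbf P$ and substitute into the first relation to obtain $\mathbf H = (\mathbf a\,\mathbf b^{-1})\,\mathbf P$. Defining $C_g$ via $\mathbf C(x):=\mathbf a(x)\,\mathbf b(x)^{-1}=\sum_{g\geq 0} C_g x^g$ and reading off the coefficient of $x^g$ gives the desired identity $H_g=\sum_{g_1+g_2=g}C_{g_1}P_{g_2}$. By construction $\mathbf C$, and hence every $C_g$, is determined entirely by the two families of relative invariants $\langle[pt]\mid[pt]\rangle^{\mathbb P^3,H}_{g,L,(1)}$ and $\langle\,\,\mid[pt]\rangle^{\tilde{\mathbb P}^3,H}_{g,F,(1)}$, as claimed.
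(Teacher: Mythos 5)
Your proposal is correct and is essentially the paper's own argument: the paper encodes the same two convolution identities (Lemmas \ref{ptlemma1} and \ref{ptlemma2}) as lower-triangular Toeplitz matrix equations in the $I_g$'s and $J_g$'s and defines the $C_g$'s by multiplying the $I$-matrix with the inverse of the $J$-matrix, which is precisely your power-series identity $\mathbf C=\mathbf a\cdot\mathbf b^{-1}$ read off coefficientwise. The only cosmetic difference is the justification of invertibility: the paper appeals to the absolute/relative correspondence of Hu--Li--Ruan (and notes $I_0=J_0=1$ via virtual localization), whereas you verify $b_0=1$ by the direct geometric count of the unique fiber through a prescribed point of $H$.
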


\begin{proof}
Denote $\langle \tau_{d_1}p^*\alpha_1,\cdots,\tau_{d_m}p^*\alpha_m|\mathbbm 1\rangle
^{\tilde X,E}_{g,p!(A)-e,(1)}$, ${\langle [pt]|[pt]\rangle}^{\mathbb P^3,H}_{g,L,(1)}$ and ${\langle \,\,|[pt]\rangle}_{g,F,(1)}^{\tilde{\mathbb P}^3,H}$ by $K_g$, $I_g$ and $J_g$ respectively.
Then for $g\geq 0$, we may rewrite our comparison results (\ref{3-1}) and (\ref{3-2}) as
\begin{eqnarray*}
H_g&=&I_gK_0+I_{g-1}K_1+\cdots+I_0K_g\\
\\
P_g&=&J_gK_0+J_{g-1}K_1+\cdots+J_0K_g ,
\end{eqnarray*}
or in matrix form
$$
      \left(\begin{array}{l}
      H_0\\H_1\\\vdots\\H_g \end{array}\right)=
      \left(\begin{array}{llll}
      I_0&&\multicolumn{2}{c}{0}\\
      I_1&I_0&&\\
      \vdots&&\ddots&\\
      I_g&I_{g-1}&\cdots&I_0
      \end{array}\right)
      \left(\begin{array}{l}
      K_0\\K_1\\\vdots\\K_g \end{array}\right),
$$

$$
       \left(\begin{array}{l}
       P_0\\P_1\\\vdots\\P_g \end{array}\right)=
       \left(\begin{array}{llll}
       J_0&&\multicolumn{2}{c}{0}\\
      J_1&J_0&&\\
      \vdots&&\ddots&\\
      J_g&J_{g-1}&\cdots&J_0
      \end{array}\right)
      \left(\begin{array}{l}
     K_0\\K_1\\\vdots\\K_g \end{array}\right).
$$
This is a special form of absolute/relative correspondence for Gromov-Witten invariants ({\bf Theorem 5.15 in \cite{HLR}}). In particular, $I_0\neq 0$ and $J_0\neq 0$, which implies that both matrices with entries $I_g$ and $J_g$ are invertible (one can also use virtual localization \cite{GP} to check that $I_0=J_0=1$).
Write
$$\left(\begin{array}{llll}
       C_0&&\multicolumn{2}{c}{0}\\
       C_1&C_0&&\\
       \vdots&&\ddots&\\
       C_g&C_{g-1}&\cdots&C_0
        \end{array}\right)=\left(\begin{array}{llll}
      I_0&&\multicolumn{2}{c}{0}\\
      I_1&I_0&&\\
      \vdots&&\ddots&\\
      I_g&I_{g-1}&\cdots&I_0
      \end{array}\right)  \left(\begin{array}{llll}
       J_0&&\multicolumn{2}{c}{0}\\
      J_1&J_0&&\\
      \vdots&&\ddots&\\
      J_g&J_{g-1}&\cdots&J_0
      \end{array}\right)^{-1},
$$and we obtain the required equality.
\end{proof}

To get Theorem \ref{pt1}, we need to compute $C_g$'s in (\ref{3-4}). A crucial observation from the proof of Lemma \ref{ptlemma3} is that $C_g$\rq{}s are determined by relative invariants ${\langle [pt]|[pt]\rangle}^{\mathbb P^3,H}_{g,L,(1)}$ and ${\langle \,\,|[pt]\rangle}_{g,F,(1)}^{\tilde{\mathbb P}^3,H}$, and are independent of the choice of $X, m, \alpha_i, A$. Therefore, to compute these universal coefficients, we may choose $X=\mathbb{P}^3, m=1, \alpha_1=[pt], A=L$. Then (\ref{3-4}) becomes
\begin{equation}\label{equ}
   \langle[pt],[pt]\rangle_{g,L}^{\mathbb P^3}=\sum_{g_1+g_2=g}C_{g_1}\cdot \langle[pt]\rangle_{g_2,F}^{\tilde{\mathbb P}^3},
\end{equation}
where $F$ is the class of a fiber in $\tilde{\mathbb P}^3 \cong {\mathbb P}_{{\mathbb P}^2}({\mathcal O}\oplus {\mathcal O}(-1))$.

To get $C_g$'s by solving the equation (\ref{equ}), we need to compute the absolute Gromov-Witten invariants $ \langle[pt],[pt]\rangle_{g,L}^{\mathbb P^3}$ and $\langle[pt]\rangle_{g_2,F}^{\tilde{\mathbb P}^3}$.
From this, we have

\begin{lemma}
\ban
\langle[pt],[pt]\rangle_{g,L}^{\mathbb P^3}&=&\frac{(-1)^g\cdot 2}{(2g+2)!},\\
\langle[pt]\rangle_{g,F}^{\tilde{\mathbb P}^3}&=&\delta_{g,0}.
\nan
\end{lemma}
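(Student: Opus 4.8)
The plan is to evaluate both invariants by rigidifying the image curve with the point insertions and reducing to Hodge integrals over $\overline{\mathcal M}_{g,n}$.

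For $\langle[pt],[pt]\rangle^{\bP^3}_{g,L}$, I would first observe that two generic point constraints force every contributing degree-$L$ stable map to have image the unique line $\ell\cong\bP^1$ joining the two points, with the two marked points landing on them. Thus the invariant equals the genus-$g$, degree-one, two-point \emph{local} Gromov--Witten invariant of $\ell$ with its normal bundle $N_{\ell/\bP^3}\cong\mathcal O(1)\oplus\mathcal O(1)$ (read off from the Euler sequence restricted to $\ell$, where $T\bP^3|_\ell=\mathcal O(2)\oplus\mathcal O(1)\oplus\mathcal O(1)$). I would then run virtual localization \cite{GP} for the $\C^*$-action on $\ell$ with fixed points $0,\infty$ carrying the two insertions.

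The fixed loci are indexed by splittings $g_1+g_2=g$, the contributions coming from genus-$g_1$ and genus-$g_2$ contracted curves bubbling over $0$ and $\infty$ along an edge mapping isomorphically to $\ell$. Each fixed locus contributes a product of Hodge integrals over $\overline{\mathcal M}_{g_i,1}$ built from $\psi$-classes and the Chern classes of the dual Hodge bundle $\mathbb E^\vee$ (the latter being the obstruction from $\mathcal O(1)^{\oplus2}$). Assembling and evaluating these Hodge integrals in closed form is the main obstacle. The efficient route is to package everything into a generating series and feed in the Faber--Pandharipande evaluation $\sum_{g\ge0}t^{2g}\int_{\overline{\mathcal M}_{g,1}}\lambda_g\psi_1^{2g-2}=\frac{t/2}{\sin(t/2)}$ together with Mumford's relation $c(\mathbb E)c(\mathbb E^\vee)=1$. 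I expect the two boundary factors to combine into the inverse square, giving
\[ \sum_{g\ge0}\langle[pt],[pt]\rangle^{\bP^3}_{g,L}\,u^{2g}=\Big(\tfrac{\sin(u/2)}{u/2}\Big)^2, \]
equivalently $\sum_{g\ge0}\langle[pt],[pt]\rangle^{\bP^3}_{g,L}\,u^{2g+2}=\big(2\sin(u/2)\big)^2=2(1-\cos u)$. Extracting the coefficient of $u^{2g+2}$ then yields $\frac{(-1)^g\cdot2}{(2g+2)!}$; the $g=0$ term returns $1$, the single line through two points, as a check.

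For $\langle[pt]\rangle^{\tilde{\bP}^3}_{g,F}$ the computation is much softer. The single point insertion fixes the fiber $F_0\cong\bP^1$ of $\bP_{\bP^2}(\mathcal O\oplus\mathcal O(-1))$ through it, and such a fiber has \emph{trivial} normal bundle $\mathcal O\oplus\mathcal O$. Hence the obstruction bundle on the locus of maps into $F_0$ is $(\mathbb E^\vee)^{\oplus2}$, whose Euler class is $c_g(\mathbb E^\vee)^2=\lambda_g^2$. Mumford's relation forces $\lambda_g^2=0$ for $g\ge1$, so the invariant vanishes in positive genus, while for $g=0$ it is the evident count of a single degree-one parametrization of the fiber, equal to $1$. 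This gives $\langle[pt]\rangle^{\tilde{\bP}^3}_{g,F}=\delta_{g,0}$, completing the plan.
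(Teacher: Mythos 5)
Your proposal is correct and follows essentially the route the paper itself indicates: the paper omits the proof of this lemma, citing virtual localization \cite{GP} and Pandharipande's degenerate contribution computations \cite{P1,P2} (just as it handles the analogous curve-case lemma via the unique rigidified curve and a citation to \cite{Z}), and your sketch---rigidifying the line/fiber by the point constraints, reducing to the local theory of $\mathcal O(1)\oplus\mathcal O(1)$ resp.\ $\mathcal O\oplus\mathcal O$, and evaluating via localization, the Faber--Pandharipande series $\sum_g t^{2g}\int_{\overline{\mathcal M}_{g,1}}\lambda_g\psi_1^{2g-2}=\frac{t/2}{\sin(t/2)}$, and Mumford's relation $\lambda_g^2=0$---is exactly a fleshed-out version of that cited computation, with the correct generating function $\left(\frac{\sin(u/2)}{u/2}\right)^2$ and the correct vanishing $\delta_{g,0}$.
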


These equalities can be proved either directly by virtual localization \cite{GP} or by degenerate contribution computation \cite{P2}. In fact, Theorem 3 in \cite{P2} may specialize to the case of $\bP^3$ and obtain
these invariants. Here we omit the proof.

{\bf Proof of Theorem  \ref{pt1}:} We first perform symplectic cutting at a point in $X$ and get equation (\ref{3-1}). Then we perform symplectic cutting along the exceptional divisor $E$ in $\tilde{X}$ and get (\ref{3-2}).
Finally,  we can solve the equation \eqref{equ} to get the universal coefficients
$$
                C_g=\frac{(-1)^g\cdot 2}{(2g+2)!}.
$$
This proves Theorem \ref{pt1}.

\begin{remark}
One can relax the requirement in Theorem \ref{pt1} to $m\geqslant 0$, which can be checked by going through the proof of Lemma \ref{ptlemma1}, \ref{ptlemma2} and \ref{ptlemma3}.  This also holds for Theorem \ref{pt2} and \ref{pt3}.
\end{remark}

It is illuminating to rephrase this using a genus $g$ gravitational Gromov-Witten generating function. Suppose that $T_0=1, T_1,\cdots, T_m $ is a basis for $H^*(X, {\mathbb Q})$.
We introduce supercommuting variables $t_d^j$ for $d\geq 0$ and $0\leq j\leq m$ with $\deg t^j_d = \deg T_j$. Set
$$
      \gamma = \sum_{d=0}^\infty\sum_{j=1}^mt^j_d\tau_dT_j.
$$
Define the genus $g$ gravitational Gromov-Witten generating function as
$$
               F_g^X(t_d^j) = \sum_{n=0}^\infty\sum_{A\in H_2(X, {\mathbb Z})}\frac{1}{n!}\langle \gamma^n, [pt]\rangle_{g, A}^Xq^A,
$$
$$
              F_g^{\tilde{X}}(t_d^j) = \sum_{n=0}^\infty\sum_{A\in H_2(X,{\mathbb Z})}\frac{1}{n!}\langle (p^*\gamma^n\rangle_{g,p^!(A)- e}^{\tilde{X}}q^{p^!(A)-e}.
$$
Set
$$
                  F^X(u,t_d^j) = \sum_{g\geq 0}u^{2g-2}F_g^X(t_d^j)
$$
and
$$
                 F^{\tilde{X}}(u,t_d^j) = \sum_{g\geq 0} u^{2g-2}F_g^{\tilde{X}}(t_d^j).
$$

Then from Theorem \ref{pt1}, we have

\begin{corollary}\label{generating function}
$$
          F^X(u,\gamma)=\bigg(\frac{\sin\frac{u}{2}}{\frac{u}{2}}\bigg)^2\cdot F^{\tilde{X}}(u,p^*\gamma),
$$
where we need to change the variable $q^A$ to $q^{p^!(A)-e}$.
\end{corollary}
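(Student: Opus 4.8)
The plan is to view Corollary \ref{generating function} as the generating-function incarnation of Theorem \ref{pt1}: all of the geometry sits in that theorem, and the remaining task is to assemble its genus-by-genus statement into a product of formal power series in $u$ and then to recognize the resulting scalar factor in closed form.

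First I would expand $\gamma^n$ by multilinearity into a sum of monomials in the variables $t_d^j$, each weighting a string of descendent insertions $\tau_{d_1}T_{j_1},\cdots,\tau_{d_n}T_{j_n}$. Since $\gamma$ involves only $T_1,\cdots,T_m\in H^{>0}(X,\bQ)$, every such string meets the hypotheses of Theorem \ref{pt1}. Applying the theorem to each monomial, with $[pt]$ playing the role of the distinguished marked point, and summing back up gives, for every $g\geq 0$, $A\neq 0$ and $n\geq 0$,
$$
\langle \gamma^n,[pt]\rangle^X_{g,A}=\sum_{g_1+g_2=g}C_{g_1}\,\langle (p^*\gamma)^n\rangle^{\tilde X}_{g_2,\,p^!A-e},\qquad C_{g_1}=\frac{(-1)^{g_1}\cdot 2}{(2g_1+2)!}.
$$
The degree-zero term $A=0$ vanishes on the left, because the insertion $[pt]$ together with $\alpha_i\in H^{>0}$ forces a dimension mismatch for constant maps, and it vanishes on the right as well since $-e$ is not an effective class; thus the identity holds for all $A\in H_2(X,\bZ)$. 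As $C_{g_1}$ is a universal constant independent of $n$ and $A$, substituting into the definitions of $F_g^X$ and $F_g^{\tilde X}$ and carrying out the prescribed substitution $q^A\mapsto q^{p^!A-e}$ produces the convolution
$$
F_g^X=\sum_{g_1+g_2=g}C_{g_1}\,F_{g_2}^{\tilde X}.
$$

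Next I would multiply by $u^{2g-2}$ and sum over $g\geq 0$. Writing $g=g_1+g_2$ and factoring $u^{2g-2}=u^{2g_1}\cdot u^{2g_2-2}$ decouples the double sum into a Cauchy product,
$$
F^X(u,\gamma)=\Big(\sum_{g_1\geq 0}C_{g_1}u^{2g_1}\Big)\Big(\sum_{g_2\geq 0}u^{2g_2-2}F_{g_2}^{\tilde X}\Big)=\Big(\sum_{g_1\geq 0}C_{g_1}u^{2g_1}\Big)\cdot F^{\tilde X}(u,p^*\gamma),
$$
all of which is read as an identity of formal power series, so no analytic convergence is needed.

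Finally I would identify the scalar factor. Using the half-angle identity $\sin^2(u/2)=(1-\cos u)/2$ and the Taylor expansion of $\cos u$,
$$
\Big(\frac{\sin\frac{u}{2}}{\frac{u}{2}}\Big)^2=\frac{2(1-\cos u)}{u^2}=\frac{2}{u^2}\sum_{g_1\geq 0}\frac{(-1)^{g_1}u^{2g_1+2}}{(2g_1+2)!}=\sum_{g_1\geq 0}\frac{(-1)^{g_1}\cdot 2}{(2g_1+2)!}u^{2g_1},
$$
which is precisely $\sum_{g_1\geq 0}C_{g_1}u^{2g_1}$, completing the argument. The proof is thus entirely formal once Theorem \ref{pt1} is in hand; the only points needing attention are the legitimacy of applying the theorem monomial-by-monomial (ensured by $T_j\in H^{>0}$) and the vanishing of the $A=0$ contributions, while the one genuine computation, the trigonometric identification above, is elementary. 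I expect no serious obstacle here, since the real content has already been spent in proving Theorem \ref{pt1}.
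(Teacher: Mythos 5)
Your proof is correct and is exactly the argument the paper intends: the paper offers no written proof beyond ``from Theorem \ref{pt1}, we have,'' and your genus-by-genus convolution $F_g^X=\sum_{g_1+g_2=g}C_{g_1}F_{g_2}^{\tilde X}$, the Cauchy-product factorization, and the identification $\sum_{g\geq 0}\frac{(-1)^g\cdot 2}{(2g+2)!}u^{2g}=\left(\frac{\sin(u/2)}{u/2}\right)^2$ supply precisely the intended formal steps. The only imprecision is your reason for discarding the $A=0$ terms on the left: a dimension match is in fact possible there (e.g.\ with several odd degree-one insertions and no descendants), and the genuine reason for vanishing is that for constant maps all evaluation maps coincide, so the integrand contains the factor $[pt]\cup\alpha_i=0$ in $H^*(X,\bQ)$; this does not affect the validity of your argument.
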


Similar to the proof of Theorem \ref{pt1} above, we may divide the proof of Theorem \ref{pt2} into the following Lemma \ref{ptlemma4}, \ref{ptlemma5} and \ref{ptlemma6}, the proof of which is analogous to that of Lemma \ref{ptlemma1}, \ref{ptlemma2} and \ref{ptlemma3} respectively.

\begin{lemma}\label{ptlemma4}
Under the same assumptions as in Theorem \ref{pt1}, we have
\begin{eqnarray*}
     & & {\langle\tau_1[pt],\tau_{d_1}\alpha_1,\cdots,\tau_{d_m}\alpha_m\rangle}^X_{g,A}\nonumber\\
     & & \nonumber\\
     & = &  \sum_{g^++g^-=g} {\langle\tau_1[pt]|\xi\rangle}^{\mathbb P^3,H}_{g^+,L,(1)}
     {\langle\tau_{d_1}p^*\alpha_1,\cdots,\tau_{d_m}p^*\alpha_m|\xi\rangle}^{\tilde X,E}_{g^-,p^!A-e,(1)},
\end{eqnarray*}
where $H$ is the hyperplane at infinity, $L$ is the class of a line in $\bP^3$, and $\xi$ is the cohomology class of a line in $H\cong E\cong \bP^2$.
\end{lemma}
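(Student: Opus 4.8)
The plan is to imitate the proof of Lemma \ref{ptlemma1} essentially verbatim, the only new ingredient being the gravitational descendent $\tau_1$ on the point insertion. First I would perform the symplectic cutting of $X$ at the point as in Remark \ref{symplectic cut}, arranging $\tau_1[pt]$ to have support on the $X^+=\bP^3$ piece and each $\alpha_i$ (hence $p^*\alpha_i$) on the $X^-=\tilde X$ piece. Applying the degeneration formula \eqref{degeneration formula} expresses the left-hand side as a sum over partitions $\mu$, dual-basis indices, and admissible splittings $(\Gamma^+,\Gamma^-)$ of products
$$
{\langle\tau_1[pt]\mid\delta_{j_1},\cdots,\delta_{j_{\ell(\mu)}}\rangle}^{\bP^3,H}_{g^+,A^+,\mu}\cdot{\langle\tau_{d_1}p^*\alpha_1,\cdots,\tau_{d_m}p^*\alpha_m\mid\delta^{j_1},\cdots,\delta^{j_{\ell(\mu)}}\rangle}^{\tilde X,E}_{g^-,A^-,\mu}.
$$
The goal is then to show that the dimension constraint forces $\mu=(1)$ together with a single degree-two divisor insertion.

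The one arithmetic difference from Lemma \ref{ptlemma1} is the degree-matching condition on $X$: since $\tau_1$ adds $2$ to the real degree of the point insertion, the virtual dimension of $\overline{\mathcal M}_{g,m+1}(X,A)$ now equals $\sum_i\deg\alpha_i+2\sum_i d_i+8$ rather than $\cdots+6$. Keeping the $\bP^3$-side virtual dimension $2\int_{A^+}c_1(\bP^3)+2+2\ell(\mu)-2|\mu|$ (the descendent does not change it), using $A^+=|\mu|L$ from $A^+\cdot H=|\mu|$ so that $\int_{A^+}c_1(\bP^3)=4|\mu|$, and feeding these into the dimension constraint \eqref{dimension}, I expect the dimension equation to reduce to
$$
3|\mu|+\frac12\sum_{i=1}^{\ell(\mu)}\deg\delta^{j_i}=3+\ell(\mu).
$$

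To finish I would solve this over the admissible data. Connectivity of the glued curve gives $\ell(\mu)\geq 1$, while $|\mu|\geq\ell(\mu)$ holds for every partition; combined with the displayed equation these yield $3\ell(\mu)\leq 3|\mu|\leq 3+\ell(\mu)$, hence $\ell(\mu)=1$ and then $\mu=(1)$. The equation collapses to $\deg\delta^{j_1}=2$, so the surviving relative class on $E\cong\bP^2$ lies in $H^2(\bP^2)$, namely the line class $\xi$. I expect the only delicate (though mild) point to be the normalization bookkeeping on $H^*(E)$: one must check that this degree-two class appears as $\xi$ on \emph{both} relative factors, which follows because $\xi$ is self-dual under the intersection pairing, $\int_{\bP^2}\xi\cup\xi=1$, so the degree-two part of the dual-basis sum $\sum_j\delta_j\otimes\delta^j$ contributes exactly $\xi$ against $\xi$. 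Summing over the genus splitting $g^++g^-=g$ then gives the asserted identity, the remainder being routine dimension counting identical to Lemma \ref{ptlemma1}.
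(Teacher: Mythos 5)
Your proposal is correct and follows the paper's proof essentially verbatim: the paper likewise reduces to the dimension constraint $\tfrac12\sum_i\deg\delta^{j_i}+3|\mu|=3+\ell(\mu)$ (the only change from Lemma \ref{ptlemma1} being the extra $+2$ from $\tau_1$), forces $\mu=(1)$ and $\deg\delta^{j_1}=2$, and concludes with the class $\xi$ on both factors. Your added remarks on solving the constraint via $|\mu|\geq\ell(\mu)\geq 1$ and on the self-duality of $\xi$ in $H^*(\bP^2)$ are details the paper leaves implicit, but the argument is the same.
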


\begin{proof}
The same argument as in the proof of Lemma \ref{ptlemma1} leads to the dimension constraint
$$
     \frac{1}{2}\sum\limits_{i=1}^{\ell(\mu)}\deg \delta^{j_i} + 3|\mu| = 3 + \ell(\mu).
$$
This constraint holds only if
$$
    \mu = (1), \,\,\,\, \deg\delta_{j_1}=2.
$$
This implies Lemma \ref{ptlemma4}.
\end{proof}

\begin{lemma}\label{ptlemma5}
Under the same assumptions as in Theorem \ref{pt1}, we have
\begin{eqnarray*}
&&{\langle-E^2,\tau_{d_1}p^*\alpha_1,\cdots ,\tau_{d_m}p^*\alpha_m\rangle}^{\tilde X}_{g,p^!A-e}\\
&=&\sum_{g^++g^-=g}{\langle-E^2|\xi\rangle}_{g^+,F,(1)}^{\tilde{\mathbb P}^3,H}\cdot{\langle \tau_{d_1}p^*\alpha_1,\cdots, \tau_{d_m}p^*\alpha_m|\xi\rangle}^{\tilde X,E}_{g^-,p^!A-e,(1)},
\end{eqnarray*}
where $F\in H_2(\tilde\bP^3,\bZ)$ is the class of a fiber in $\tilde{\mathbb P}^3\cong{\mathbb P}_{\bP^2}(\mathcal O\oplus\mathcal O(-1))$, and $\xi$ is the cohomology class of a line in $H\cong E\cong\bP^2$.
\end{lemma}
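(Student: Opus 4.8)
The plan is to follow the proof of Lemma \ref{ptlemma2} almost verbatim, the single new ingredient being the degree-four insertion $-E^2$. First I would perform the symplectic cut of $\tilde X$ along $E$ as in Lemma \ref{ptlemma2}, so that the $+$ side is $\tilde{\mathbb P}^3\cong{\mathbb P}_{\bP^2}(\cO\oplus\cO(-1))$ and the $-$ side is $\tilde X$; as there, each $p^*\alpha_i$ is supported away from $E$ and hence contributes to the $\tilde X$ factor. The clause needing genuine attention is the placement of $-E^2$. Writing $j\colon E\hookrightarrow\tilde X$ for the inclusion, one has $j^*E=c_1(\cO_{\bP^2}(-1))=-\xi$, so $-E^2=j_*\xi$ is Poincar\'e dual to a line contained in $E$; choosing a representative supported in a neighborhood of $E$, this cycle is swept onto the $\tilde{\mathbb P}^3$ piece by the cut, so that $-E^2$ contributes only to the relative invariants of $(\tilde{\mathbb P}^3,H)$, restricting there to the corresponding class (still denoted $-E^2$). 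The degeneration formula \eqref{degeneration formula} then yields, in analogy with \eqref{3-3},
\begin{multline*}
{\langle-E^2,\tau_{d_1}p^*\alpha_1,\cdots,\tau_{d_m}p^*\alpha_m\rangle}^{\tilde X}_{g,p^!A-e}
=\sum{\mathfrak z}(\mu)\,
{\langle-E^2|\delta_{j_1},\cdots,\delta_{j_{\ell(\mu)}}\rangle}_{g^+,(p^!A-e)^+,\mu}^{\tilde{\mathbb P}^3,H}\\
\cdot\,
{\langle\tau_{d_1}p^*\alpha_1,\cdots,\tau_{d_m}p^*\alpha_m|\delta^{j_1},\cdots,\delta^{j_{\ell(\mu)}}\rangle}^{\tilde X,E}_{g^-,(p^!A-e)^-,\mu}.
\end{multline*}

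The second step is the dimension count, which should isolate the surviving terms. The intersection conditions $(p^!A-e)^+\cdot H=|\mu|$ and $(p^!A-e)^+\cdot E=1$ are unchanged from Lemma \ref{ptlemma2}, so once more $(p^!A-e)^+=F+(|\mu|-1)L$ and $\int_{(p^!A-e)^+}c_1(\tilde{\mathbb P}^3)=4|\mu|-2$. Relative to Lemma \ref{ptlemma2} the only changes are that the $\tilde{\mathbb P}^3$ side now carries one absolute marked point (decorated by $-E^2$) and the left-hand moduli space acquires the extra complex-codimension-two insertion $-E^2$; propagating these through the dimension constraint \eqref{dimension} exactly as before raises the constant by one and gives
\[
\tfrac12\sum_{i=1}^{\ell(\mu)}\deg\delta^{j_i}+3|\mu|=3+\ell(\mu),
\]
which is formally identical to the constraint obtained in Lemma \ref{ptlemma4}. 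Since $|\mu|\geq\ell(\mu)$ and all degrees are non-negative, for $|\mu|\geq2$ the left-hand side already exceeds the right, so the only surviving term is $\mu=(1)$ with $\deg\delta^{j_1}=2$. Because $H^2(E)=H^2(\bP^2)=\bQ\xi$ and $\xi$ is self-dual under the Poincar\'e pairing on $\bP^2$, both $\delta_{j_1}$ and its dual $\delta^{j_1}$ equal $\xi$, while ${\mathfrak z}((1))=1$, so the displayed sum collapses to the asserted identity.

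The main obstacle I expect is not the dimension bookkeeping---a routine variant of Lemmas \ref{ptlemma2} and \ref{ptlemma4}---but the clean justification that $-E^2$ localizes onto the $\tilde{\mathbb P}^3$ factor. The point is that the representing line $\ell\subset E$ may be taken in the zero section of the normal-bundle completion $\tilde{\mathbb P}^3\cong{\mathbb P}_{\bP^2}(\cO\oplus\cO(-1))$, which lies in the interior of the $+$ piece, disjoint from the relative divisor $H$. Hence a global lifting of $-E^2$ can be chosen whose restriction to the $\tilde X$ side vanishes and whose restriction to $\tilde{\mathbb P}^3$ is the corresponding line class; this is the only step where the geometry of the cut, rather than pure dimension counting, enters. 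Once it is in place, the reformulated constraint eliminates every partition with $|\mu|\geq2$ by non-negativity of degrees alone, so no separate multiple-cover vanishing argument is required.
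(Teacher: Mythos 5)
Your proposal is correct and follows essentially the same route as the paper: perform the symplectic cut of $\tilde X$ along $E$ as in Lemma \ref{ptlemma2}, apply the degeneration formula, and use the dimension constraint $\tfrac12\sum_i\deg\delta^{j_i}+3|\mu|=3+\ell(\mu)$ to force $\mu=(1)$ and $\deg\delta^{j_1}=2$, whence $\delta_{j_1}=\delta^{j_1}=\xi$. The paper's own proof is just this dimension count stated tersely; your additional justification that $-E^2=j_*\xi$ lifts to a class supported on the $\tilde{\mathbb P}^3$ piece (a line in the zero section, disjoint from $H$) is a correct filling-in of a step the paper leaves implicit.
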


\begin{proof}
    The same dimension calculation as in the proof of Lemma \ref{ptlemma1} gives rise to the dimension constraint
$$
    \frac{1}{2}\sum\limits_{i=1}^{\ell(\mu)}\deg \delta^{j_i} + 3|\mu| = 3 + \ell(\mu).
$$
This also implies
$$
    \mu = (1), \,\,\,\, \deg\delta_{j_1}=2,
$$
which proves Lemma \ref{ptlemma5}.
\end{proof}

\begin{lemma}\label{ptlemma6}
Under the same assumptions as in Theorem \ref{pt1}, denote $\langle\tau_1[pt],\tau_{d_1}\a_1$, $\cdots,\tau_{d_m}\a_m\rangle^X_{g,A}$ and
$\langle-E^2,\tau_{d_1}p^*\a_1,\cdots,\tau_{d_m}p^*\a_m\rangle^{\tilde X}_{g,p^!A-e}$ by
$H_g$ and $P_g$ respectively. Then
\begin{equation}\label{3-5}
      H_g=\sum_{g_1+g_2=g}C_{g_1}P_{g_2},
\end{equation}
where $C_g$'s can be determined by relative invariants ${\langle\tau_1[pt]|\xi\rangle}^{\mathbb P^3,H}_{g,L,(1)}$
 and ${\langle-E^2|\xi\rangle}_{g,F,(1)}^{\tilde{\mathbb P}^3,H}$. Here $\xi$ is the cohomology class of a line in $H\cong E\cong \bP^2$.
\end{lemma}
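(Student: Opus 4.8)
The plan is to run exactly the argument of Lemma \ref{ptlemma3}, feeding in the comparison results of Lemma \ref{ptlemma4} and Lemma \ref{ptlemma5} in place of \eqref{3-1} and \eqref{3-2}. The key structural observation is that both comparison results are convolutions against one and the same relative invariant of the pair $(\tilde X, E)$, so that eliminating it will produce the desired relation between $H_g$ and $P_g$.

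First I would fix the notation
$$
K_g := \langle \tau_{d_1}p^*\alpha_1,\cdots,\tau_{d_m}p^*\alpha_m \mid \xi\rangle^{\tilde X,E}_{g,p^!A-e,(1)},\quad
I_g := \langle \tau_1[pt]\mid \xi\rangle^{\mathbb P^3,H}_{g,L,(1)},\quad
J_g := \langle -E^2\mid \xi\rangle^{\tilde{\mathbb P}^3,H}_{g,F,(1)}.
$$
With this notation, Lemma \ref{ptlemma4} and Lemma \ref{ptlemma5} read
$$
H_g=\sum_{g_1+g_2=g}I_{g_1}K_{g_2},\qquad P_g=\sum_{g_1+g_2=g}J_{g_1}K_{g_2}.
$$
As in Lemma \ref{ptlemma3}, these two systems assert that the column vectors $(H_0,\dots,H_g)^{t}$ and $(P_0,\dots,P_g)^{t}$ are the images of the \emph{common} vector $(K_0,\dots,K_g)^{t}$ under the lower-triangular Toeplitz matrices assembled from $\{I_g\}$ and $\{J_g\}$, respectively; this is precisely the shape of the absolute/relative correspondence (\textbf{Theorem 5.15 in \cite{HLR}}).

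Next I would invert. Provided the two Toeplitz matrices are invertible, I may eliminate the common vector $(K_g)$ and set
$$
C := \big(I\text{-matrix}\big)\cdot\big(J\text{-matrix}\big)^{-1}.
$$
Since the product of lower-triangular Toeplitz matrices is again lower-triangular Toeplitz, $C$ is determined by its first column $(C_0,\dots,C_g)^{t}$, and the identity $H=CP$ unpacks to $H_g=\sum_{g_1+g_2=g}C_{g_1}P_{g_2}$ with the $C_g$ manufactured solely out of $\{I_g\}$ and $\{J_g\}$, as asserted. Invertibility of a lower-triangular Toeplitz matrix is equivalent to the non-vanishing of its diagonal, i.e.\ to $I_0\neq 0$ and $J_0\neq 0$.

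The main obstacle is therefore checking these two genus-zero seeds. Unlike in Lemma \ref{ptlemma3}, where the relative insertion was the unit $\mathbbm 1$ and $I_0, J_0$ were transparently non-zero (indeed equal to $1$), here the relative marked point carries the hyperplane class $\xi$ while the absolute insertion is $\tau_1[pt]$, respectively $-E^2$, so positivity of the leading coefficients is no longer automatic. I would verify
$$
I_0=\langle \tau_1[pt]\mid \xi\rangle^{\mathbb P^3,H}_{0,L,(1)}\neq 0,\qquad
J_0=\langle -E^2\mid \xi\rangle^{\tilde{\mathbb P}^3,H}_{0,F,(1)}\neq 0
$$
by a direct genus-zero relative computation, or by virtual localization \cite{GP}, before the Toeplitz inversion above is legitimate. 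Once this non-vanishing is established, the argument closes exactly as in Lemma \ref{ptlemma3}.
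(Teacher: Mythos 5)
Your proposal is correct and follows exactly the paper's route: the paper's proof of this lemma is literally the argument of Lemma \ref{ptlemma3} rerun with Lemma \ref{ptlemma4} and Lemma \ref{ptlemma5} substituted for the comparison results \eqref{3-1} and \eqref{3-2}, which is precisely the Toeplitz-elimination scheme you describe. Your extra caution about verifying $I_0\neq 0$ and $J_0\neq 0$ (no longer automatic since the relative insertion is $\xi$ rather than $\mathbbm 1$) is a point the paper glosses over by appealing to the same invertibility claim as in Lemma \ref{ptlemma3}; flagging it is sound, and the required nonvanishing is consistent with the localization computations the paper later uses in the proof of Theorem \ref{pt2}.
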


The proof of Lemma \ref{ptlemma6} is identical to that of Lemma \ref{ptlemma3} with Lemma \ref{ptlemma1} and \ref{ptlemma2} replaced by Lemma \ref{ptlemma4} and \ref{ptlemma5} respectively.

{\bf Proof of Theorem \ref{pt2}:} Similar to the proof of Theroem \ref{pt1}, we only need to compute the universal coefficients $C_g$'s in (\ref{3-5}). Similarly, we choose $X=\mathbb{P}^3, m=1, \alpha_1=[L], A=L$. Then (\ref{3-5}) becomes
\begin{equation}\label{3-6}
   \langle\tau_1[pt],[L]\rangle_{g,L}^{\mathbb P^3}=\sum_{g_1+g_2=g}C_{g_1}\cdot \langle-E^2,[L]\rangle_{g_2,F}^{\tilde{\mathbb P}^3}.
\end{equation}
By virtual localization \cite{GP}, we have
$$
\langle\tau_1[pt],[L]\rangle_{g,L}^{\mathbb P^3} = \frac{(-1)^g}{(2g+1)!},
$$
$$
\langle-E^2,[L]\rangle_{g,F}^{\tilde{\mathbb P}^3} = \delta_{g,0}.
$$
We solve (\ref{3-6}) to obtain the universal coefficients:
$$
           C_g=\frac{(-1)^g}{(2g+1)!},
$$
which gives Theorem \ref{pt2}.

In the rest of this section, we will prove Theorem \ref{pt3}. Similar argument to in the proof of Lemma \ref{3-2} and \ref{3-3}, we may prove the following Lemmas.

\begin{lemma}\label{ptlemma7}
Under the same assumptions as in Theorem \ref{pt1}, we have
\begin{eqnarray*}
&&{\langle\tau_1E,\tau_{d_1}p^*\alpha_1,\cdots ,\tau_{d_m}p^*\alpha_m\rangle}^{\tilde X}_{g,p^!A-e}\\
&=&\sum_{g^++g^-=g}{\langle\tau_1E|\xi\rangle}_{g^+,F,(1)}^{\tilde{\mathbb P}^3,H}\cdot{\langle \tau_{d_1}p^*\alpha_1,\cdots, \tau_{d_m}p^*\alpha_m|\xi\rangle}^{\tilde X,E}_{g^-,p^!A-e,(1)},
\end{eqnarray*}
where $F\in H_2(\tilde\bP^3,\bZ)$ is the class of a fiber in $\tilde{\mathbb P}^3\cong{\mathbb P}_{\bP^2}(\mathcal O\oplus\mathcal O(-1))$, and $\xi$ is the cohomology class of a line in $H\cong E\cong\bP^2$.
\end{lemma}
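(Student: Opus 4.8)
The plan is to mimic the proof of Lemma~\ref{ptlemma5} line by line, the sole new ingredient being that the distinguished insertion carried onto the positive piece is the descendant $\tau_1 E$ in place of the even class $-E^2$. First I would perform the symplectic cut along $E$ in $\tilde X$ exactly as in Remark~\ref{symplectic cut}, degenerating $\tilde X$ into the two pairs $(\tilde{\mathbb P}^3,H)$ and $(\tilde X,E)$ glued along $E$, with $\tilde{\mathbb P}^3\cong{\mathbb P}_{\bP^2}(\mathcal O\oplus\mathcal O(-1))$. Since each $p^*\alpha_i$ is supported away from $E$, the corresponding marked points go to the $(\tilde X,E)$ piece, while the marked point carrying $\tau_1 E$ goes to the $(\tilde{\mathbb P}^3,H)$ piece, using a global lifting of $E$ whose support may be taken in a neighbourhood of the cut. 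Applying the degeneration formula~\eqref{degeneration formula} then writes the absolute invariant on the left as a sum over partitions $\mu$ and admissible splittings of products of relative invariants on $(\tilde{\mathbb P}^3,H)$ and $(\tilde X,E)$.

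The heart of the matter is the dimension count, and the point that makes everything collapse is that $\tau_1 E$ carries precisely the same dimension budget as $-E^2$: the class $E$ has complex half-degree $1$ and the descendant factor $\psi_1$ contributes a further $1$, totalling $2$, which is exactly the half-degree of $-E^2$. Running the computation of Lemma~\ref{ptlemma2} with this extra marked point on the positive piece, and using $(p^!A-e)^+\cdot H=|\mu|$ together with $(p^!A-e)^+\cdot E=1$, so that $\int_{(p^!A-e)^+}c_1(\tilde\bP^3)=4|\mu|-2$, the constraint~\eqref{dimension} reduces to the same equation as in Lemma~\ref{ptlemma5}, namely
\ban
\frac{1}{2}\sum_{i=1}^{\ell(\mu)}\deg\delta^{j_i}+3|\mu|=3+\ell(\mu).
\nan
Since $|\mu|=(p^!A-e)\cdot E=1$ forces $\mu=(1)$, this leaves $\deg\delta^{j_1}=2$, and a degree-$2$ class on $H\cong E\cong\bP^2$ is a multiple of the line class $\xi$ (which is self-dual under the Poincar\'e pairing). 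Hence only the term with relative insertion $\xi$ on both pieces survives, which is exactly the claimed factorization with positive-side class $F$.

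The step I expect to demand the most care is verifying that the descendant $\tau_1$ on the distinguished point behaves exactly like an even insertion under the degeneration formula, i.e.\ that the cotangent line class stays attached to its marked point once that point is sent to the $(\tilde{\mathbb P}^3,H)$ piece and then enters the virtual dimension of $\overline{\mathcal M}_{\Gamma^+}(\tilde{\mathbb P}^3,H)$ with the expected contribution. This is the one genuine difference from Lemma~\ref{ptlemma5} and is what must be checked rather than merely transcribed; it is guaranteed by the behaviour of $\psi$-classes under the splitting axiom of~\cite{LR}. Granting this, the remaining bookkeeping—restricting the global lifting of $E$ to the positive piece and identifying the surviving relative insertion with $\xi$—is routine and parallels Lemma~\ref{ptlemma5}.
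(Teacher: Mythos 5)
Your overall route is exactly the paper's: the paper proves this lemma only by remarking that the argument of Lemma \ref{ptlemma2} (and hence of Lemma \ref{ptlemma5}) carries over verbatim — symplectic cut along $E$, degeneration formula with the $\tau_1E$ insertion forced onto the $(\tilde{\bP}^3,H)$ piece (the lift of $E$ restricts to zero on the negative piece, since the limit of the exceptional divisor lies in $\tilde{\bP}^3$ as the zero section), and the dimension constraint $\frac12\sum_{i}\deg\delta^{j_i}+3|\mu|=3+\ell(\mu)$, which forces $\mu=(1)$ and $\deg\delta^{j_1}=2$, i.e.\ a single relative insertion $\xi$ on each side. Your key observation — that $\tau_1E$ has the same total degree as $-E^2$, so the constraint is identical to that of Lemma \ref{ptlemma5} — is precisely why the paper can dispose of this lemma in one line. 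Your worry about descendants is also harmless: the degeneration formula \eqref{degeneration formula} as stated already distributes $\tau_{d}$-insertions with their marked points, so no new check is needed there.

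There is, however, one misstated step: you deduce $\mu=(1)$ from ``$|\mu|=(p^!A-e)\cdot E=1$''. That equality is false. The quantity $|\mu|$ is the total contact order with the \emph{cutting} divisor, i.e.\ $|\mu|=(p^!A-e)^+\cdot H$, whereas $(p^!A-e)^+\cdot E=1$ is a separate constraint — indeed your own preceding sentence correctly lists these as two different equations. A priori the degenerate configuration can meet the cutting divisor with multiplicity $|\mu|\geqslant 2$ (for instance $(p^!A-e)^+=F+L$ meets $H$ twice while still meeting $E$ once); such terms are killed only by the dimension constraint, not by intersection numbers. The fix is immediate and is what the paper does in Lemmas \ref{ptlemma1}--\ref{ptlemma5}: in your displayed constraint, $\deg\delta^{j_i}\geqslant 0$ and $\ell(\mu)\leqslant|\mu|$ give $2|\mu|\leqslant 3$, while $\mu=\emptyset$ would give $0=3$; hence $\mu=(1)$, and then $\deg\delta^{j_1}=2$ follows. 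With this one correction your proof is complete and coincides with the paper's.
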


\begin{lemma}\label{ptlemma8}
Under the same assumptions as in Theorem \ref{pt1}, denote $\langle\tau_1E,\tau_{d_1}p^*\a_1$, $\cdots,\tau_{d_m}p^*\a_m\rangle^{\tilde X}_{g,p^!A-e}$ and
$\langle-E^2,\tau_{d_1}p^*\a_1,\cdots,\tau_{d_m}p^*\a_m\rangle^{\tilde X}_{g,p^!A-e}$ by
$H_g$ and $P_g$ respectively. Then
$$
      H_g=\sum_{g_1+g_2=g}C_{g_1}P_{g_2},
$$
where $C_g$'s can be determined by relative invariants ${\langle\tau_1E|\xi\rangle}_{g,F,(1)}^{\tilde{\mathbb P}^3,H}$
 and ${\langle-E^2|\xi\rangle}_{g,F,(1)}^{\tilde{\mathbb P}^3,H}$. Here $\xi$ is the cohomology class of a line in $H\cong E\cong \bP^2$.
\end{lemma}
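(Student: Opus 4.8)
The plan is to run the same argument as in the proof of Lemma \ref{ptlemma3}, with the two comparison results Lemma \ref{ptlemma7} and Lemma \ref{ptlemma5} now playing the roles of Lemma \ref{ptlemma1} and Lemma \ref{ptlemma2}. The point is that both of these comparison results were obtained by the \emph{same} symplectic cut of $\tilde X$ along $E$, so they express $H_g$ and $P_g$ as genus-convolutions of a universal $\tilde{\mathbb P}^3$ relative invariant against one and the same blow-up-side relative invariant. Writing $I_g=\langle\tau_1E|\xi\rangle_{g,F,(1)}^{\tilde{\mathbb P}^3,H}$, $J_g=\langle-E^2|\xi\rangle_{g,F,(1)}^{\tilde{\mathbb P}^3,H}$ and $K_g=\langle\tau_{d_1}p^*\alpha_1,\cdots,\tau_{d_m}p^*\alpha_m|\xi\rangle_{g,p^!A-e,(1)}^{\tilde X,E}$, Lemma \ref{ptlemma7} reads $H_g=\sum_{g^++g^-=g}I_{g^+}K_{g^-}$ and Lemma \ref{ptlemma5} reads $P_g=\sum_{g^++g^-=g}J_{g^+}K_{g^-}$, where the factor $K_{g^-}$ is literally identical in the two formulae.

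I would then package both relations in lower-triangular matrix form, exactly as in Lemma \ref{ptlemma3}: let $\mathbf I$ and $\mathbf J$ be the lower-triangular matrices whose $(i,j)$-entry is $I_{i-j}$ and $J_{i-j}$ respectively (for $i\geq j$, and $0$ otherwise), and let $\mathbf K=(K_0,K_1,\dots,K_g)^{\mathsf t}$. Then $\mathbf H=\mathbf I\,\mathbf K$ and $\mathbf P=\mathbf J\,\mathbf K$, which is again an instance of the absolute/relative correspondence (Theorem 5.15 in \cite{HLR}). Provided the leading coefficients $I_0$ and $J_0$ are nonzero, both $\mathbf I$ and $\mathbf J$ are invertible, so I can eliminate the common factor $\mathbf K$ and set $\mathbf C=\mathbf I\,\mathbf J^{-1}$. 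This $\mathbf C$ is lower-triangular and its entries $C_g$ depend only on the two sequences $\{I_g\}$ and $\{J_g\}$; reading off the convolution encoded by $\mathbf H=\mathbf C\,\mathbf P$ yields $H_g=\sum_{g_1+g_2=g}C_{g_1}P_{g_2}$, as claimed, with the $C_g$ determined by $\langle\tau_1E|\xi\rangle_{g,F,(1)}^{\tilde{\mathbb P}^3,H}$ and $\langle-E^2|\xi\rangle_{g,F,(1)}^{\tilde{\mathbb P}^3,H}$.

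The only step that genuinely requires checking is the invertibility, namely that $I_0=\langle\tau_1E|\xi\rangle_{0,F,(1)}^{\tilde{\mathbb P}^3,H}\neq 0$ and $J_0=\langle-E^2|\xi\rangle_{0,F,(1)}^{\tilde{\mathbb P}^3,H}\neq 0$. I expect this genus-zero non-vanishing to be the main obstacle, though a minor one: it can be settled by a direct virtual localization computation on $\tilde{\mathbb P}^3\cong{\mathbb P}_{\bP^2}(\mathcal O\oplus\mathcal O(-1))$ \cite{GP}, in the same spirit as the check that $I_0=J_0=1$ in the proof of Lemma \ref{ptlemma3}. Since the statement asserts only that the $C_g$ are \emph{determined by} these two relative invariant sequences, no explicit evaluation of the $C_g$ themselves is needed at this stage; the closed-form values are obtained subsequently by the universal-coefficient specialization already used for Theorems \ref{pt1} and \ref{pt2}.
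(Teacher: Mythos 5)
Your proposal is correct and follows essentially the same route as the paper: the paper proves Lemma \ref{ptlemma8} by repeating the argument of Lemma \ref{ptlemma3} verbatim, with the comparison results Lemma \ref{ptlemma7} and Lemma \ref{ptlemma5} (both from the same cut of $\tilde X$ along $E$, hence sharing the identical factor $K_g=\langle\tau_{d_1}p^*\alpha_1,\cdots,\tau_{d_m}p^*\alpha_m|\xi\rangle_{g,p^!A-e,(1)}^{\tilde X,E}$) replacing Lemmas \ref{ptlemma1} and \ref{ptlemma2}, then eliminating $\mathbf K$ via invertibility of the lower-triangular matrices built from $I_g$ and $J_g$. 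Your handling of the nonvanishing of $I_0$ and $J_0$ by virtual localization also matches the paper's justification, so nothing further is needed.
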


{\bf Proof of Theorem \ref{pt3}:} Similar to the proof of Theorem \ref{pt2}, we choose $X=\tilde{\mathbb{P}}^3, m=1, \alpha_1=[L], A=F$ and obtain:
$$
   \langle\tau_1E,L\rangle_{g,F}^{\tilde{\mathbb P}^3}=\sum_{g_1+g_2=g}C_{g_1}\cdot \langle-E^2,L\rangle_{g_2,F}^{\tilde{\mathbb P}^3}.
$$
By virtual localization \cite{GP}, we have
$$
\langle\tau_1E,L\rangle_{g,F}^{\tilde{\mathbb P}^3} = \delta_{g,0}\cdot 3-\frac{(-1)^g\cdot 2}{(2g+1)!}.
$$
So
$$
    C_g=\delta_{g,0}\cdot 3-\frac{(-1)^g\cdot 2}{(2g+1)!},
$$
which gives Theorem \ref{pt3}.

\section{Formulae for Blow-up along a smooth curve}

In this section, we give a detailed proof of Theorem \ref{curve}. We always assume that total degrees of insertions match the virtual dimension of the moduli spaces, since otherwise the required equalities are trivial.

\begin{lemma}\label{curvelemma1}
Under the same assumptions as in Theorem \ref{curve}, we have
\begin{eqnarray*}
     &  &\langle[C],\tau_{d_1}\alpha_1,\cdots,\tau_{d_m}\alpha_m\rangle_{g,A}^X\\
     &=&\sum\limits_{g_1+g_2=g}\langle[C]|[pt]\rangle^{\bar X^+,Z}_{g_1,F,(1)}\cdot{\langle\tau_{d_1}p^*\alpha_1,\cdots,\tau_{d_m}p^*\alpha_m|\mathbbm 1\rangle}^{\tilde X,E}_{g_2,p^!A-e,(1)},
\end{eqnarray*}
where $F\in H_2(\bar X^+,\bZ)$ is the class of a line in the fiber of $\bar X^+=\bP_C(N_C\oplus\cO_C)$.
\end{lemma}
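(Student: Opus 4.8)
The plan is to mirror the proof of Lemma \ref{ptlemma1}, performing the symplectic cut of Remark \ref{symplectic cut} along the curve $C$ in place of a point. This degenerates $X$ into $\bar X^+\cup_Z\bar X^-$, where $\bar X^+=\bP_C(N_C\oplus\cO_C)$ is a $\bP^2$-bundle over $C$, $\bar X^-=\tilde X$ is the blow-up, and the common divisor $Z=E=\bP_C(N_C)$ is the exceptional divisor. Since the $\alpha_i$ are supported away from $C$, they all lift to the $\tilde X$-side as $p^*\alpha_i$, while the class $[C]$ is supported in $\bar X^+$. Applying the degeneration formula \eqref{degeneration formula} expresses the left-hand side as a sum over partitions $\mu$ and admissible splittings of products of $\langle[C]\mid\delta_{j_1},\cdots\rangle^{\bar X^+,Z}_{g^+,A^+,\mu}$ with $\langle\tau_{d_1}p^*\alpha_1,\cdots\mid\delta^{j_1},\cdots\rangle^{\tilde X,E}_{g^-,A^-,\mu}$, and the whole content of the lemma is to show that only the term with $\mu=(1)$, $A^+=F$, and relative insertion $[pt]$ (dually $\mathbbm 1$) survives.

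The key computation is the dimension constraint \eqref{dimension}. Writing $\dim\bar{\mathcal M}_{g,m+1}(X,A)=\sum\deg\alpha_i+2\sum d_i+4$ (as $\deg[pt]$ is replaced by $\deg[C]=4$) and using the virtual dimension of $\bar{\mathcal M}_{\Gamma^+}(\bar X^+,Z)$ together with the degree-matching condition on the $\tilde X$-side exactly as in Lemma \ref{ptlemma1}, the constraint reduces to
\[
\int_{A^+}c_1(\bar X^+)+\tfrac12\sum_{i=1}^{\ell(\mu)}\deg\delta^{j_i}-|\mu|=1+\ell(\mu).
\]
Now $H_2(\bar X^+,\bZ)=\bZ F\oplus\bZ S$, where $F$ is the fiber line and $S=\bP_C(\cO_C)$ is the zero-section, and any effective class $A^+=aF+bS$ must satisfy $A^+\cdot Z=a=|\mu|$ with $b\geq 0$. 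I would then compute the two Chern numbers $\int_F c_1(\bar X^+)=3$ (since the normal direction $\pi^*T_C$ is trivial on a fiber, so $c_1(\bar X^+)$ restricts to $c_1(T_{\bP^2})=3H$ there) and $\int_S c_1(\bar X^+)=\int_C c_1(X)$ (since $N_{S/\bar X^+}\cong N_C$, giving $c_1(\bar X^+)|_S=c_1(T_C)+c_1(N_C)$, whose degree is $\int_C c_1(X)$ by the normal bundle sequence of $C\subset X$). Substituting $\int_{A^+}c_1(\bar X^+)=3|\mu|+b\int_C c_1(X)$ turns the constraint into
\[
2|\mu|+b\int_C c_1(X)+\tfrac12\sum_{i=1}^{\ell(\mu)}\deg\delta^{j_i}=1+\ell(\mu).
\]

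Finally I would read off the forced values. Since $|\mu|=\sum\mu_i\geq\ell(\mu)\geq 1$ and every term on the left is nonnegative, the chain $2\ell(\mu)\leq 2|\mu|\leq 1+\ell(\mu)$ gives $\ell(\mu)=1$; then $2|\mu|\leq 2$ forces $|\mu|=1$, and the residual equality $b\int_C c_1(X)+\tfrac12\deg\delta^{j_1}=0$ forces $b=0$ and $\deg\delta^{j_1}=0$. Hence $A^+=F$, $\mu=(1)$, $\delta^{j_1}=\mathbbm 1$ with dual $\delta_{j_1}=[pt]$, and since $\mathfrak z((1))=1$ this yields precisely the claimed identity. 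The one genuinely new feature compared with Lemma \ref{ptlemma1} — and the step I expect to require the most care — is controlling the extra horizontal homology direction $S$ of the projective bundle, which has no analogue in the point case: it is exactly the hypothesis $\int_C c_1(X)>0$ that makes $b\int_C c_1(X)$ strictly positive for $b>0$ and thereby rules out curve components wrapping the base. I would also verify that the assumptions $\alpha_i\in H^{>2}(X,\bQ)$ supported away from $C$ are precisely what guarantee that all $\alpha_i$-insertions remain on the $\tilde X$-side throughout the splitting.
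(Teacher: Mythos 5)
Your proposal is correct and follows essentially the same route as the paper: symplectic cut along $C$, the degeneration formula, and the dimension constraint, with the hypothesis $\int_Cc_1(X)>0$ together with effectivity ($b\geq 0$) ruling out the horizontal homology direction exactly as the paper does (the paper packages your two Chern numbers via $c_1(\bar X^+)=\pi^*c_1(X)|_C-3c_1(\xi^+)$ and $-c_1(\xi^+)=PD(Z)$, which is equivalent to your fiber/section decomposition). The only step you assert without justification is $\ell(\mu)\geq 1$; the paper derives $\mu\neq\emptyset$ from $m\geq 1$ and the connectedness of the stable maps, since the insertions $[C]$ and $\alpha_i$ are forced onto opposite sides of the degeneration.
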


\begin{proof}
We first perform symplectic cutting along $C$ and assume that the support of $[C]$ is in $X^+$ and the support of $\alpha_i$ is away from $C$.
By the degeneration formula \eqref{degeneration formula}, we have:
\begin{eqnarray}\label{4-1}
    & &\<[C],\tau_{d_1}\alpha_1,\cdots,\tau_{d_m}\alpha_m\>_{g,A}^X\nonumber\\
    &=&\sum\limits_{}\mathfrak{z}(\mu)\<[C]|\delta_{j_1},\cdots,\delta_{j_{\ell(\mu)}}\>_{\Gamma_+}^{\bullet,\bar X^+,Z}\\
    & &\qquad\cdot\<\tau_{d_1}p^*\alpha_1,\cdots,\tau_{d_m}p^*\alpha_m|\delta^{j_1},\cdots,\delta^{j_{\ell(\mu)}}\>_{\Gamma_-}^{\bullet,\tilde X,E}.\nonumber
\end{eqnarray}
 Recall that we have assumed that
$$
      \dim \overline M_{g,m+1}(X,A) =\sum\limits_{i=1}^m deg\alpha_i+2\sum\limits_{i=1}^{m}d_i+4.
$$

Assume that A term with $(\Gamma^+,\Gamma^-)$ in RHS of (\ref{4-1}) has nonzero contribution, and then
\begin{eqnarray*}
      \dim \overline M_{\Gamma_+}(\bar X^+,Z) & = &  2\int_{A^+}c_1(X^+)+2+2\ell(\mu)-2|\mu|, \\
      \dim \overline M_{\Gamma_-}(\tilde X,E) & = &  \sum\limits_{i=1}^mdeg\alpha_i+2\sum\limits_{i=1}^{m}d_i+\sum\limits_{i=1}^{\ell(\mu)}deg\delta^{j_i}.
\end{eqnarray*}
So by the dimension constraint \eqref{dimension} for the degeneration formula, we have
\begin{eqnarray*}
\frac{1}{2}\sum\limits_{i=1}^{\ell(\mu)}deg\delta^{j_i}+\int_{A^+}c_1(X^+)-|\mu|=1+\ell(\mu).\nonumber
\end{eqnarray*}
Let $\xi^+$ be the tautological line bundle of $\bar X^+=\bP_C(N_C\oplus\cO_C)$, and we have
\begin{eqnarray*}
c_1(\bar X^+)=\pi^*c_1(X)|_C-3c_1(\xi^+),
\end{eqnarray*}
where $\pi:\bar X^+\rightarrow C$ is the natural projection. Note that $-c_1(\xi^+)$ is the Poincar\'e dual of the divisor $Z$ in $\bar X^+$. Since $|\mu|=A^+\cdot Z$, it follows that
\begin{eqnarray*}
\int_{A^+}c_1(\bar X^+)=\int_{\pi_*A^+}c_1(X)|_C+3|\mu|.
\end{eqnarray*}
Therefore, dimension constraint becomes
\begin{eqnarray*}
&&\frac{1}{2}\sum\limits_{i=1}^{\ell(\mu)}deg\delta^{j_i}+\int_{\pi_*A^+}c_1(X)|_C+2|\mu|=1+\ell(\mu).
\end{eqnarray*}
Since $m\geqslant 1$, it follows that $\mu\ne \emptyset$ by the connectedness of the stable maps to $X$, and the dimension constraint holds only if
\begin{eqnarray*}
\mu=(1),\quad\textrm{deg}\delta^{j_1}=0,\quad\int_{\pi_*A^+}c_1(X)|_C=0,
\end{eqnarray*}
which implies Lemma \ref{curvelemma1}.
\end{proof}

\begin{lemma}\label{curvelemma2}
Under the same assumptions as in Theorem \ref{curve}, we have
\begin{eqnarray*}
    &  &\langle\tau_{d_1}p^*\alpha_1,\cdots,\tau_{d_m}p^*\alpha_m\rangle_{g,p^!A-e}^{\tilde X}\\
    & = &\sum\limits_{g_1+g_2=g}\langle|[pt]\rangle^{\bar{\tilde X}^+,Z}_{g_1,F,(1)}{\langle\tau_{d_1}p^*\alpha_1,\cdots,\tau_{d_m}p^*\alpha_m|\mathbbm 1\rangle}^{\tilde X,E}_{g_2,p^!A-e,(1)},
\end{eqnarray*}
where $F\in H_2(\tilde X^+,\bZ)$ is the class of a line in the fiber of $\bar{\tilde X}^+=\bP_E(N_E\oplus\cO_E)$.
\end{lemma}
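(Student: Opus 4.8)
The plan is to run the same degeneration strategy as in Lemma \ref{ptlemma2}, now cutting $\tilde X$ along the divisor $E$. Since $E$ is a smooth symplectic divisor, the symplectic cut of Remark \ref{symplectic cut} applied to $Y=E$ (the codimension‑one case, $2k=2$) produces a positive piece $\bar{\tilde X}^+=\bP_E(N_E\oplus\cO_E)$, a $\bP^1$‑bundle over $E$, while the negative piece is the blow‑up of $\tilde X$ along the \emph{divisor} $E$, which is $\tilde X$ itself with $E$ playing the role of the relative divisor $Z$. Because the hypotheses of Theorem \ref{curve} make each $p^*\alpha_i$ supported away from $E$, all absolute insertions land on the negative factor. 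Applying the degeneration formula \eqref{degeneration formula} produces a sum over partitions $\mu$, admissible splittings, and dual relative classes $\delta_{j_i},\delta^{j_i}$, with positive factor $\langle\,|\delta_{j_1},\dots,\delta_{j_{\ell(\mu)}}\rangle^{\bullet\bar{\tilde X}^+,Z}_{\Gamma^+,\beta,\mu}$ (no absolute markings) and negative factor $\langle\tau_{d_1}p^*\alpha_1,\dots|\delta^{j_1},\dots\rangle^{\bullet\tilde X,E}_{\Gamma^-,\beta^-,\mu}$, where I abbreviate $\beta=(p^!A-e)^+$. The goal is to show that only $\mu=(1)$, $\delta_{j_1}=[pt]$ and $\beta=F$ survive.

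First I carry out the dimension count. Writing $\pi:\bar{\tilde X}^+\to E$ and letting $\xi^+$ be the tautological line bundle with $-c_1(\xi^+)=PD[Z]$, the projective bundle formula gives $c_1(\bar{\tilde X}^+)=\pi^*c_1(\tilde X)|_E-2c_1(\xi^+)$, exactly as in Lemma \ref{curvelemma1} but with the coefficient $3$ replaced by $2$ for the $\bP^1$‑bundle. Hence $\int_\beta c_1(\bar{\tilde X}^+)=\int_{\pi_*\beta}c_1(\tilde X)|_E+2|\mu|$ using $\beta\cdot Z=|\mu|$. Feeding this, together with the nonzero‑contribution dimensions, into the constraint \eqref{dimension} and cancelling the common insertion degrees reduces everything to
\[
\tfrac12\sum_{i=1}^{\ell(\mu)}\deg\delta^{j_i}+\int_{\pi_*\beta}c_1(\tilde X)|_E+|\mu|=\ell(\mu).
\]

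The step with real content is the next one. Unlike in Lemma \ref{ptlemma2}, where the base $\bP^2$ is positive enough for the dimension count alone to force $|\mu|=1$, here the coefficient of $|\mu|$ is only $1$, so the displayed identity by itself permits any $\mu=(1^{\ell})$; the extra input I use is conservation of homology class under the degeneration. Since the ruling $F$ of $\bar{\tilde X}^+$ collapses under the natural contraction of $\bar{\tilde X}^+$ onto its base $E\subset\tilde X$, the positive class pushes to $i_{E*}(\pi_*\beta)$, giving $\beta^-=(p^!A-e)-i_{E*}(\pi_*\beta)$ in $H_2(\tilde X)$. Intersecting with $E$, using that the relative contact forces $\beta^-\cdot E=|\mu|$ while $(p^!A-e)\cdot E=1$ and $E|_E=-\zeta$ (with $\zeta=c_1(\cO_E(1))$), I obtain $\int_{\pi_*\beta}\zeta=|\mu|-1$. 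Combining with $c_1(\tilde X)|_E=\pi_E^*(c_1(X)|_C)+\zeta$, where $\pi_E:E\to C$, and writing $b\ge 0$ for the degree of $\pi_*\beta$ over $C$, gives $\int_{\pi_*\beta}c_1(\tilde X)|_E=b\int_C c_1(X)+(|\mu|-1)$. The displayed identity then becomes $\tfrac12\sum\deg\delta^{j_i}+b\int_C c_1(X)+2|\mu|-1=\ell(\mu)\le|\mu|$, that is
\[
\tfrac12\sum_{i=1}^{\ell(\mu)}\deg\delta^{j_i}+b\int_C c_1(X)+(|\mu|-1)\le 0 .
\]
Each summand is non‑negative, since $\int_C c_1(X)>0$, $b\ge 0$ by effectivity of $\pi_*\beta$ in the ruled surface $E$, and $|\mu|\ge 1$ because $(p^!A-e)\cdot E=1\neq 0$ forces $\mu\neq\emptyset$. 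Hence all vanish: $\mu=(1)$, $b=0$, and every $\deg\delta^{j_i}=0$. Then $\delta^{j_1}=\mathbbm 1$, so $\delta_{j_1}=[pt]$, and from $\int_{\pi_*\beta}\zeta=0$, $\pi_{E*}\pi_*\beta=0$ and $\beta\cdot Z=1$ I conclude $\pi_*\beta=0$ and $\beta=F$. Assembling these with $\mathfrak z((1))=1$ and genus additivity yields the stated formula.

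I expect the main obstacle to be precisely this last point: the base $E$ of $\bar{\tilde X}^+$ is a ruled surface rather than a positive variety, so the Chern number of a positive‑side curve grows only like $2|\mu|$ and the dimension constraint cannot by itself bound $\ell(\mu)$. The essential new ingredient, absent from the point‑blow‑up case, is the conservation of the intersection number with $E$, which pins $|\mu|=1$ once it is coupled with the curvature positivity $\int_C c_1(X)>0$ and the effectivity $b\ge 0$ on the fibered surface; verifying these positivity inputs rigorously is where the care is needed.
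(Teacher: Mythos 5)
Your proposal is correct and follows essentially the same route as the paper: cut $\tilde X$ along $E$, apply the degeneration formula, and combine the dimension constraint \eqref{dimension} with conservation of the intersection number with $E$ and the positivity $\int_C c_1(X)>0$ to force $\mu=(1)$, $\delta_{j_1}=[pt]$ and $(p^!A-e)^+=F$. The only (immaterial) differences are cosmetic: you compute $c_1(\bar{\tilde X}^+)$ via adjunction in one step and extract the relation $\int_{\pi_*\beta}\zeta=|\mu|-1$ from the negative-side contact condition $\beta^-\cdot E=|\mu|$, whereas the paper works with the positive-side constraints $(p^!A-e)^+\cdot Z=|\mu|$, $(p^!A-e)^+\cdot E=1$; you are in fact slightly more explicit than the paper about the positivity inputs ($b\geqslant 0$ by effectivity, $\mu\neq\emptyset$).
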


\begin{proof}
We first degenerate $\tilde X$ along $E$, and assume that the support of $p^*\alpha_i$ is away from $E$. By the degeneration formula \eqref{degeneration formula}, we have
\begin{eqnarray}\label{4-2}
   &   &{\langle E,\tau_{d_1}p^*\alpha_1,\cdots,\tau_{d_m}p^*\alpha_m\rangle}^{\tilde X}_{g,p^!A-e}\nonumber\\
   & = &\sum\limits_{}\mathfrak{z}(\mu)\<E|\delta_{j_1},\cdots,\delta_{j_{\ell(\mu)}}\>_{\Gamma_+}^{\bullet,\bar{\tilde X}^+,Z}\\
   &   &\qquad\cdot\<\tau_{d_1}p^*\alpha_1,\cdots,\tau_{d_m}p^*\alpha_m|\delta^{j_1},\cdots,\delta^{j_{\ell(\mu)}}\>_{\Gamma_-}^{\bullet,\tilde X,E}.\nonumber
\end{eqnarray}
 Recall that we have assumed that
$$
     \dim \overline M_{g,m+1}(\tilde X,p^!A-e) =\sum\limits_{i=1}^mdeg\alpha_i+2\sum\limits_{i=1}^{m}d_i+2.
$$

Assume that a term with $(\Gamma^+,\Gamma^-)$ in RHS of (\ref{4-2}) has nonzero contribution. Then
\begin{eqnarray*}
    \dim \overline M_{\Gamma_+}(\bar{\tilde X}^+,Z)  & = &   2\int_{(p^!A-e)^+}c_1(\bar{\tilde X}^+)+2+2\ell(\mu)-2|\mu|, \\
    \dim \overline M_{\Gamma_-}(\tilde X,E) & = & \sum\limits_{i=1}^mdeg\alpha_i+2\sum\limits_{i=1}^{m}d_i+\sum\limits_{i=1}^{\ell(\mu)}deg\delta^{j_i}.
\end{eqnarray*}
So by the dimension constraint \eqref{dimension} for the degeneration formula, we have
\begin{eqnarray*}
         \frac{1}{2}\sum\limits_{i=1}^{\ell(\mu)}\textrm{deg}\delta^{j_i}+\int_{(p^!A-e)^+}c_1(\tilde X^+)-|\mu|=\ell(\mu).
\end{eqnarray*}
Let $\xi^+$ be the tautological line bundle of $\bar{\tilde X}^+=\bP_E(N_E\oplus\cO_E)$. Then Euler exact sequence gives
\begin{eqnarray*}
c_1(\bar{\tilde X}^+)=\pi^*c_1(E)+\pi^*c_1(N_E)-2c_1(\xi^+),
\end{eqnarray*}
where $\pi:\bar{\tilde X}^+\rightarrow E$ is the natural projection. Note that $N_E$ is the tautological line bundle of $E\cong\bP_C(N_C)$, and so
\begin{eqnarray*}
c_1(E)=\pi_E^*c_1(X)|_C-2c_1(N_E),
\end{eqnarray*}
where $\pi_E:E\rightarrow C$ is the natural projection. Therefore,
\begin{eqnarray*}
c_1(\bar{\tilde X}^+)=(\pi_E\circ\pi)^*c_1(X)|_C-\pi^*c_1(N_E)-2c_1(\xi^+).
\end{eqnarray*}
Note that we have the following natural decomposition
\begin{eqnarray*}
H_2(\bar{\tilde X}^+,\bZ)\cong\bZ F\oplus H_2(E,\bZ),
\end{eqnarray*}
and we can write
\begin{eqnarray*}
(p^!A-e)^+=aF+\pi_*(p^!A-e)^+,\textrm{ for some }a\in\bZ_{\geqslant 0}.
\end{eqnarray*}
We have the following constraints for $(p^!A-e)^+$:
\begin{eqnarray*}
\left\{\begin{array}{ccl}(p^!A-e)^+\cdot Z&=&|\mu|,\\(p^!A-e)^+\cdot E&=&(p^!A-e)\cdot E=1,\end{array}\right.
\end{eqnarray*}
and this gives
\begin{eqnarray*}
\pi_*(p^!A-e)^+\cdot E=-(|\mu|-1).
\end{eqnarray*}
Note that $-c_1(\xi^+)$ is the Poincar\'e dual of the divisor $Z$ in $\bar{\tilde X}^+$, and therefore
\begin{eqnarray*}
\int_{(p^!A-e)^+}c_1(\bar{\tilde X}^+)=\int_{(\pi_E\circ\pi)_*(p^!A-e)^+}c_1(X)|_C + 3|\mu|-1.
\end{eqnarray*}
Hence the dimension constraint becomes
\begin{eqnarray*}
\frac{1}{2}\sum\limits_{i=1}^{\ell(\mu)}deg\delta^{j_i}+\int_{(\pi_E\circ\pi)_*(p^!A-e)^+}c_1(X)|_C+2|\mu|=1+\ell(\mu).\nonumber
\end{eqnarray*}
Since $m\geqslant 1$, it follows that $\mu\neq\emptyset$ by the connectedness of the stable maps to $\tilde X$.
So the dimension constraint holds only if
\begin{eqnarray*}
\mu=(1),\quad deg\delta^{j_1}=0,\quad\int_{(\pi_E\circ\pi)_*(p^!A-e)^+}c_1(X)|_C=0,
\end{eqnarray*}
which implies Lemma \ref{curvelemma2}.
\end{proof}

Using  the above comparison results, the same argument as in the proof of Lemma \ref{ptlemma3} shows that the following lemma holds.

\begin{lemma}\label{curvelemma3}
Under the same assumptions as in Theorem \ref{curve}, denote $\langle[C],\tau_{d_1}\a_1,\cdots$ ,
$\tau_{d_m}\a_m\rangle^X_{g,A}$ and
$\langle\tau_{d_1}p^*\a_1,\cdots,\tau_{d_m}p^*\a_m\rangle^{\tilde X}_{g,p^!A-e}$ by
$H_g$ and $P_g$ respectively. Then
\begin{equation}\label{4-3}
      H_g=\sum_{g_1+g_2=g}C_{g_1}P_{g_2},
\end{equation}
where $C_g$'s can be determined by relative invariants ${\langle [C]|[pt]\rangle}^{\bar X^+,Z}_{g,F,(1)}$
 and ${\langle|[pt]\rangle}_{g,F,(1)}^{\bar{\tilde X}^+,Z}$.
\end{lemma}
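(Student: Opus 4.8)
The plan is to carry over verbatim the linear-algebra argument of Lemma~\ref{ptlemma3}, feeding in the two curve comparison results Lemma~\ref{curvelemma1} and Lemma~\ref{curvelemma2} in place of the point results. First I would abbreviate the common relative factor by setting $K_g:=\langle\tau_{d_1}p^*\alpha_1,\cdots,\tau_{d_m}p^*\alpha_m|\mathbbm 1\rangle^{\tilde X,E}_{g,p^!A-e,(1)}$, together with $I_g:=\langle[C]|[pt]\rangle^{\bar X^+,Z}_{g,F,(1)}$ and $J_g:=\langle|[pt]\rangle^{\bar{\tilde X}^+,Z}_{g,F,(1)}$. With this notation the two comparison formulas become the discrete convolutions
$$
H_g=\sum_{g_1+g_2=g}I_{g_1}K_{g_2},\qquad P_g=\sum_{g_1+g_2=g}J_{g_1}K_{g_2},
$$
valid for every $g\geq 0$.

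Next I would assemble these identities, for all genera up to $g$, into the two matrix equations $\mathbf H=\mathsf I\,\mathbf K$ and $\mathbf P=\mathsf J\,\mathbf K$, where $\mathbf H,\mathbf P,\mathbf K$ are the column vectors $(H_0,\dots,H_g)^{\mathsf T}$ etc., and $\mathsf I,\mathsf J$ are the lower-triangular Toeplitz matrices with $(a,b)$-entry $I_{a-b}$ (resp.\ $J_{a-b}$) for $a\geq b$ and zero otherwise. The decisive step is to verify that the leading coefficients $I_0$ and $J_0$ do not vanish; just as in Lemma~\ref{ptlemma3} this exhibits the pair as an instance of the absolute/relative correspondence (Theorem~5.15 in~\cite{HLR}), and a genus-zero computation on the projective bundles $\bar X^+$ and $\bar{\tilde X}^+$ via virtual localization~\cite{GP} confirms that both leading terms are nonzero. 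Since $J_0\neq 0$, the matrix $\mathsf J$ is invertible, so I can solve $\mathbf K=\mathsf J^{-1}\mathbf P$ and substitute into the first equation to obtain $\mathbf H=(\mathsf I\,\mathsf J^{-1})\mathbf P$.

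Finally I would use that the inverse of an invertible lower-triangular Toeplitz matrix, and the product of two lower-triangular Toeplitz matrices, is again lower-triangular Toeplitz; hence $\mathsf C:=\mathsf I\,\mathsf J^{-1}$ has constant diagonals, whose entries I denote $C_0,\dots,C_g$. Reading off $\mathbf H=\mathsf C\,\mathbf P$ row by row then yields $H_g=\sum_{g_1+g_2=g}C_{g_1}P_{g_2}$, with the $C_g$ built solely from the relative invariants $I_g$ and $J_g$, as asserted. The only genuine obstacle is the nonvanishing $I_0\neq 0$ and $J_0\neq 0$: the remainder is a formal manipulation inside the algebra of lower-triangular Toeplitz matrices, but without invertible leading terms neither system could be solved and the universal coefficients $C_g$ would fail to be well defined.
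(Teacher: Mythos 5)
Your proposal is correct and follows essentially the same route as the paper: the paper's proof of Lemma~\ref{curvelemma3} consists precisely of invoking the argument of Lemma~\ref{ptlemma3} with the comparison results of Lemma~\ref{curvelemma1} and Lemma~\ref{curvelemma2} substituted in, which is exactly what you do. Your explicit remark that inverses and products of lower-triangular Toeplitz matrices remain lower-triangular Toeplitz is left implicit in the paper, but it is the same underlying manipulation.
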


  Similar to Theorem \ref{pt1}, we only need to determine the universal coefficients $C_g$'s in (\ref{4-3}). For this, we choose $X=\bP_C(N_C\oplus\cO_C), m=1, \alpha_1=[pt], A=F$ and rewrite
(\ref{4-3}) as follows
\begin{equation}\label{4-4}
   \langle[C],[pt]\rangle_{g,F}^{\bP_C(N_C\oplus\cO_C)}=\sum_{g_1+g_2=g}C_{g_1}\cdot \langle[pt]\rangle_{g_2,F}^{\bP_E(N_E\oplus\cO_E)}.
\end{equation}

About the two absolute invariants in (\ref{4-4}), we have

\begin{lemma}\label{curvelemma4}
\ban
\langle[C],[pt]\rangle_{g,F}^{\bP_C(N_C\oplus\cO_C)}&=&\frac{(-1)^g}{(2g+1)!\cdot 2^{2g}},\\
\langle[pt]\rangle_{g,F}^{\bP_E(N_E\oplus\cO_E)}&=&\delta_{g,0}.
\nan
\end{lemma}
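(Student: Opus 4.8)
The plan is to compute both invariants as \emph{local} (fiber-class) invariants and to reduce them, via virtual localization, to universal Hodge integrals over the moduli of curves, which are then evaluated by the Faber--Pandharipande formulae (equivalently, by specializing Theorem 3 of \cite{P2}).

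First I would observe that, since $F$ is the class of a line in a fiber of the projective bundle, every connected genus-$g$ stable map of degree $F$ has image contained in a single fiber. Projecting to the base ($C$, resp.\ $E$) thus realizes $\overline{\mathcal M}_{g,n}(\,\cdot\,,F)$ as a family over the base whose fibers are moduli of genus-$g$, degree-one stable maps into a fixed projective line. Hence each invariant depends only on the normal data of that line in the ambient threefold, and in particular is independent of the global choices of $X$, $C$ and the normal bundles. A direct computation identifies these normal bundles: for the line inside the $\bP^2$-fiber of $\bP_C(N_C\oplus\cO_C)$ one gets $N_F\cong\cO(1)\oplus\cO$, the $\cO(1)$ being the normal bundle of a line in $\bP^2$ and the trivial summand the pulled-back tangent direction of $C$; for the fiber $\bP^1$ of $\bP_E(N_E\oplus\cO_E)$ one gets $N_F\cong\cO\oplus\cO$, the two trivial summands coming from the two base directions in the surface $E$. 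The two invariants may therefore be computed in the local models $\cO(1)\oplus\cO$ and $\cO\oplus\cO$ over $\bP^1$.

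Next I would apply the Graber--Pandharipande virtual localization formula \cite{GP} with respect to the torus scaling the fibers of the bundle. On a fixed locus a genus-$g$, degree-one stable map is an isomorphism from a rational component onto the line with a contracted genus-$g$ curve attached; the insertions $[pt]$ and $[C]$ restrict to the torus-fixed points and single out the contributing fixed loci. The equivariant Euler classes of the virtual normal bundle and of the obstruction bundle $R^1\pi_*f^*N_F$ then express each invariant as a universal Hodge integral over $\overline{\mathcal M}_{g,1}$. For the first invariant this is the integral $\int_{\overline{\mathcal M}_{g,1}}\psi_1^{2g-2}\lambda_g$, evaluated by Faber--Pandharipande through the generating function
$$
\sum_{g\ge 0}u^{2g}\int_{\overline{\mathcal M}_{g,1}}\psi_1^{2g-2}\lambda_g=\frac{u/2}{\sin(u/2)};
$$
assembling the localization contributions inverts this series and produces $\sum_{g\ge0}\frac{(-1)^g}{(2g+1)!\,2^{2g}}u^{2g}=\frac{\sin(u/2)}{u/2}$, giving the claimed coefficient. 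For the second invariant the triviality $N_F\cong\cO\oplus\cO$ forces the obstruction contribution to vanish for $g>0$, so that only the transverse genus-zero count survives and the invariant equals $\delta_{g,0}$.

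The step I expect to be the main obstacle is the localization bookkeeping: pinning down exactly which fixed loci the insertions select, computing the equivariant Euler classes of the virtual normal and obstruction bundles, and keeping track of the automorphism factors and normal weights carefully enough that the sign $(-1)^g$, the factorial $(2g+1)!$ and the power $2^{2g}$ emerge precisely. Once the computation is reduced to the canonical Hodge integral the evaluation is immediate; alternatively one may avoid the bookkeeping altogether by specializing the degenerate-contribution generating function of Theorem 3 in \cite{P2}, which already packages exactly these local fiber-class integrals.
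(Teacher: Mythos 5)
Your reduction (fiber classes lie in single fibers, the incidence conditions single out one embedded line, and the answer depends only on that line's normal bundle, $\cO(1)\oplus\cO$ in the first case and $\cO\oplus\cO$ in the second) is exactly the first half of the paper's proof, and your ``alternative'' closing step is essentially its second half: the paper observes that there is a \emph{unique} connected smooth embedded curve in class $F$ through the chosen representatives ($C=\bP_C(\{0\}\oplus\cO_C)$ and a point $P_0\in\bP_C(N_C\oplus\{0\})$, resp.\ a point of $\bP_E(N_E\oplus\cO_E)$), so each invariant equals the degenerate contribution of that single rational curve, and then it quotes Theorem 1.5 of \cite{Z} to get the generating function $\left(\frac{\sin(u/2)}{u/2}\right)^{\int_Fc_1-2}$, which is $\frac{\sin(u/2)}{u/2}$ for $\int_Fc_1=3$ and $1$ for $\int_Fc_1=2$. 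One point of difference worth noting: the paper cites Zinger rather than \cite{P1,P2} because the ambient manifolds are a priori only symplectic; Zinger's theorem is exactly the symplectic version of Pandharipande's Fano-case degenerate-contribution computation.

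Your primary route (virtual localization plus Faber--Pandharipande), however, has a real gap as written. First, the fibers of the projection of $\overline{\mathcal M}_{g,n}(\,\cdot\,,F)$ to the base are \emph{not} moduli of maps to a fixed line in the $\bP^2$-bundle case: lines move in a two-parameter family inside each $\bP^2$-fiber (here $h^0(N_F)=3$), so the line is not rigid, and the invariant is not a bare obstruction-bundle integral; the $H^0(N_F)$ deformations must be cancelled equivariantly against the two point constraints. Second, and more seriously, the assertion that ``assembling the localization contributions inverts'' the series $\sum_g u^{2g}\int_{\overline{\mathcal M}_{g,1}}\psi_1^{2g-2}\lambda_g=\frac{u/2}{\sin(u/2)}$ is not a formal manipulation: that the assembled answer equals $\left(\frac{\sin(u/2)}{u/2}\right)^{\int_Fc_1-2}$ (which for $\int_Fc_1=3$ happens to be the reciprocal of the Faber--Pandharipande series) is precisely the content of the degenerate-contribution theorem of \cite{P1}, and all of the deferred bookkeeping --- sums over distributions of contracted components, the role of Mumford's relation $\lambda_g^2=0$ in the $\cO\oplus\cO$ vanishing, the normal weights and automorphisms --- lives there. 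So as a self-contained argument the localization branch is incomplete; as an argument quoting \cite{P2} (Theorem 3) or \cite{Z} (Theorem 1.5), it is correct and coincides with the paper's proof.
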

\begin{proof}
In the first equality, let $C=\bP_C(\{0\}\oplus\cO_C)$ and $P_0\in\bP_C(N_C\oplus\{0\})$ be the Poincar\'e duals of $[C]$ and $[pt]$  respectively. There is a unique connected smooth embedded curve with homology class $F$ passing through $C$ and $P_0$, which is the line, in the fiber containing $P_0$, passing through $P_0$ and the intersection of $C$ and the fiber. Now LHS is equal to the degenerate contribution of the line. So Theorem 1.5 in \cite{Z} can be specialized to the first equality. The proof of the second equality is similar.
\end{proof}
\begin{remark}
Theorem 1.5 in \cite{Z} is the symplectic version of degenerate contribution computation for Fano case in \cite{P1}.
\end{remark}

{\bf Proof of Theorem \ref{curve}:} Using Lemma \ref{curvelemma4} and solving the equaiton (\ref{4-4}), we obtain the universal coefficients $C_g=\frac{(-1)^g}{(2g+1)!\cdot 2^{2g}}$, which gives Theorem \ref{curve}.

\begin{remark}
If $\int_Ac_1(X)>1$, then we can relax the condition $m>0$ to $m\geqslant 0$. One can check this by going through the proof of Lemma \ref{curvelemma1} and \ref{curvelemma2}.
\end{remark}

\section{Generalized BPS numbers}

Gromov-Witten invariants are only rational numbers in general, and hidden integrality for these invariants of projective $3$-folds has been studied since the very beginning of Gromov-Witten theory. For example, the mathematically non-rigorous computation of genus zero invariants of quintic $3$-folds \cite{COGP} inspired the famous multiple covering formula \cite{AM}. Based on M-theory consideration, Gopakumar and Vafa \cite{GV1,GV2} conjectured that countings of BPS states give hidden integrality for Gromov-Witten invariants of Calabi-Yau $3$-folds in all genera, which are multiplicities of certain representations of $SL(2)$ in the cohomology of moduli space of sheaves. Based on degenerate contribution computation, Pandharipande \cite{P1,P2} generalized the working definition of BPS numbers to arbitrary $3$-folds, and he also conjectured that these generalized BPS numbers are integers which are counts of curves satisfying incidence conditions given by insertions.

Let us review the definition of generalized BPS numbers and Pandharipande\rq{}s conjecture. Let $X$ be a connected closed symplectic manifold of real dimension $6$ and $A\in H_2(X,\bZ)$ a nonzero class. Note that by dimension consideration, $A$ carries nonzero Gromov-Witten invariants only if $\int_Ac_1(X)\geqslant 0$. Suppose that $\alpha_1,\cdots,\alpha_m\in H^{>2}(X,\bQ)$. When $\int_Ac_1(X)>0$, the generalized BPS number $n_{g,A}^X(\alpha_1,\cdots,\alpha_m)$ (at least one insertion) is given by
\ban
\sum\limits_{g=0}^\infty u^{2g}\<\alpha_1,\cdots,\alpha_m\>_{g,A}^X=\sum\limits_{g=0}^\infty u^{2g}n_{g,A}^X(\alpha_1,\cdots,\alpha_m)\cdot(\frac{\sin\frac{u}{2}}{\frac{u}{2}})^{2g-2+\int_Ac_1(X)},
\nan
and when $\int_Ac_1(X)=0$, then generalized BPS number $n_{g,A}^X$ (no insertion) is given by
\ban
\sum\limits_{g=0}^\infty u^{2g}\<\>_{g,A}^X=\sum\limits_{g=0}^\infty u^{2g}n_{g,A}^X\cdot\sum\limits_{\substack{d\in\bZ_{>0}\\\frac{A}{d}\in H_2(X,\bZ)}}\frac{1}{d}(\frac{\sin\frac{du}{2}}{\frac{u}{2}})^{2g-2}.
\nan
In general, the generalized BPS numbers are defined to satisfy the divisor equation and defined to vanish if degree $0$ and $1$ classes are inserted. So these invariants can be extended to include all cohomology classes.

If $X$ is a projective $3$-fold, and $\alpha_i$ is the Poincar\'e dual of a subvariety $X_i\subset X$ in general position, then Pandharipande conjectured that $n_{g,A}^X(\alpha_1,\cdots,\alpha_m)$ is the number of irreducible embedded curves in $X$ of geometric genus $g$, with homology class $A$ and intersecting all $X_i$'s. An important corollary of this conjecture is the integrality of generalized BPS numbers, which was proved by Zinger in the Fano case \cite{Z}, and by Ionel and Parker in the Calabi-Yau case \cite{IP3}.

{\bf Proof of Proposition \ref{BPS}:} We first prove Part (a) in Proposition \ref{BPS}.

From Corollary \ref{generating function}, we have
\ban
   \sum_{g=0}^\infty u^{2g-2}\langle[pt],\alpha_1,\cdots,\alpha_m\rangle^X_{g,A} = (\frac{\sin(u/2)}{u/2})^2 \sum_{g=0}^\infty u^{2g-2}\langle p^*\alpha_1,\cdots, p^*\alpha_m\rangle^{\tilde{X}}_{g, p^!A-e}.
\nan
Now by the definition of generalized BPS numbers, we have
\begin{eqnarray*}
  & &  \sum_{g\geq 0} u^{2g-2}n^X_{g,A}([pt],\alpha_1, \cdots, \alpha_m) (\frac{\sin (u/2)}{u/2})^{2g-2+\int_Ac_1(X)}\\
 & &\\
 &=& \sum_{g=0}^\infty u^{2g-2}\langle[pt], \alpha_1,\cdots,\alpha_m\rangle^X_{g,A} \\
 & &\\
 & = & (\frac{\sin(u/2)}{u/2})^2 \sum_{g=0}^\infty u^{2g-2}\langle p^*\alpha_1,\cdots, p^*\alpha_m\rangle^{\tilde{X}}_{g, p^!A-e}\\
 & &\\
 &=& (\frac{\sin(u/2)}{u/2})^2\sum_{g=0}^\infty u^{2g-2}n^{\tilde{X}}_{g,p^!A-e}(p^*\alpha_1,\cdots, p^*\alpha_m)(\frac{\sin(u/2)}{u/2})^{2g-2+\int_{p^!A-e}c_1(\tilde{X})}\\
 & &\\
 &=& \sum_{g=0}^\infty u^{2g-2}n^{\tilde{X}}_{g,p^!A-e}(p^*\alpha_1,\cdots, p^*\alpha_m) (\frac{\sin(u/2)}{u/2})^{2g-2+\int_Ac_1(X)}.
\end{eqnarray*}
This gives Part (a) of Corollary \ref{BPS}. The proof of Part (b) is analogous.

\begin{remark}
In the proof above, we only consider the case $m\geqslant 1$. The case $m=0$ can be treated similarly.
\end{remark}

{\bf Acknowledgements.}
The authors would like to thank Prof. Pandharipande for pointing out his early results about the degenerate contributions to us. We would also like to thank Prof. Yongbin Ruan and Pedro Acosta for their useful comments on earlier drafts.  Huazhong would like to thank Prof. Jian Zhou for sharing his ideas on Gromov-Witten theory generously, and Xiaowen Hu and Hanxiong Zhang for helpful discussions.  Weiqiang and Huazhong would like to thank Department of Mathematics of University of Michigan for its hospitality during their visiting.

\end{document}